\documentclass[12pt]{article}

\usepackage[utf8]{inputenc}
\usepackage{geometry}
\usepackage{amsmath,amsfonts,amssymb,amsthm,mathtools}
\usepackage{graphicx}
\usepackage{subcaption}   

\usepackage{booktabs}
\usepackage{multirow}
\usepackage{array}
\usepackage{float}
\usepackage{placeins}

\usepackage{xcolor,color}
\usepackage[colorlinks=true,citecolor=blue,linkcolor=blue]{hyperref}

\usepackage{authblk}
\usepackage{algorithm}
\usepackage{dsfont}
\usepackage{mathrsfs}
\usepackage{upgreek}
\usepackage{pgfplots}
\pgfplotsset{compat=1.7}
\usetikzlibrary{intersections}

\usepackage{cite}
\usepackage[numbers]{natbib}

\usepackage{fancyhdr}
\usepackage{etoolbox}
\usepackage{accents}

\newtheorem{thm}{Theorem}[section]
\newtheorem{lemma}{Lemma}[section]

\numberwithin{equation}{section}
\numberwithin{figure}{section}
\numberwithin{table}{section}

\begin{document}
\title{{\em Error Analysis of a Conforming FEM for  Multidimensional Fragmentation Equations}}

\author{
    Arushi\thanks{Department of Mathematics \& Computing, 
    Dr B R Ambedkar National Institute of Technology Jalandhar, Punjab, India. 
    Email:  arushi.mc.25@nitj.ac.in}
    \;\; and \;\;
    Naresh Kumar\thanks{Department of Mathematics \& Computing, 
    Dr B R Ambedkar National Institute of Technology Jalandhar, Punjab, India. 
    Corresponding author: nareshk@nitj.ac.in}
}
\date{}
\maketitle

\begin{abstract}
In this work, we develop and analyze a higher-order finite element method for the multidimensional fragmentation equation. To the best of our knowledge, this is the first study to establish a rigorous, conforming finite element framework for high-order spatial approximation of multidimensional fragmentation models. The scheme is formulated in a variational setting, and its stability and convergence properties are derived through a detailed mathematical analysis. In particular, the $L^2$ projection operator is used to obtain optimal-order spatial error estimates under suitable regularity assumptions on the exact solution. For temporal discretization, a second-order backward differentiation formula (BDF2) is adopted, yielding a fully discrete scheme that achieves second-order convergence in time. The theoretical analysis establishes $ L^2$-optimal convergence rates of ${\cal O}(h^{r+1})$ in space, together with second-order accuracy in time. The theoretical findings are validated through a series of numerical experiments in two and three space dimensions. The computational results confirm the predicted error estimates and demonstrate the robustness of the proposed method for various choices of fragmentation kernels and selection functions.
\end{abstract}

\noindent\textbf{Keywords:} 
Multidimensional fragmentation;  Integro-differential models; 
Higher-order finite elements;  BDF2 scheme; Error analysis.

{\em AMS Subject Classifications(2000)}. 65N15, 65N30

\section{Introduction}\label{sec:1}

\subsection{Modelling background}
The Population Balance model (PBM) is an integro-partial differential equation that provides a unified framework for understanding and predicting the time evolution of dispersed particle systems within a continuous medium. Because particles in real-world scenarios are generally polydisperse (i.e., they vary in size rather than being uniform), the PBM helps predict how the Particle Size Distribution (PSD) changes due to ``sink" and ``source" events over time. The PBM treats fragmentation as the death of large-sized particles and the birth of the smaller particles \cite{ Nicmanis1998, Rigopoulos2003, Singh2022}. Fragmentation behavior can be influenced by external conditions, such as temperature, external forces, and the medium, as well as by internal conditions, including object size, internal porosity, object shape, and fractal dimension. How a particle moves and reacts to the liquid or gas around it depends largely on its size. The PSD is simply a way to describe the number of particles of each size that exist in our system. This crucial power of PBM makes it indispensable across various sectors, facilitating the study of phenomena like aerosols, crystallization, pharmaceutical sciences, food processing, polymer degradation, and granular flows \cite{ Seinfeld1986, Saha2008, fu2020fragmentation, ramkrishna2014population}. As one-dimensional PBMs are insufficient to capture the complete dynamics of the granulation process \cite{iveson2002limitations}, these limitations of 1D PBMs necessitate the use of multidimensional PBMs.

\subsection{Problem Description}
From a mathematical standpoint, the linear fragmentation population balance equation (PBE) 
describes the temporal evolution of a particle system undergoing pure breakage. 
The model takes the form of a multidimensional integro-partial differential equation.
Let $\mathbf{x} := (x_1,x_2) \in \mathbb{R}_+^2 := (0,\infty)\times(0,\infty)$ 
denote the vector of particle properties, and let 
$u(\mathbf{x},t) \ge 0$ represent the number density of particles 
with characteristics $\mathbf{x}$ at time $t>0$. 
The dynamics of the system are governed by the linear fragmentation equation
\begin{equation}
\partial_t u(\mathbf{x},t)
=
\underbrace{
\int_{\mathbf{x}}^{\infty}
{\beta}(\mathbf{x}\mid \mathbf{y})\, {\Gamma}(\mathbf{y})\, u(\mathbf{y},t)
\, d\mathbf{y}
}_{\text{gain due to fragmentation}}
-
\underbrace{
{\Gamma}(\mathbf{x})\,u(\mathbf{x},t)
}_{\text{loss due to breakage}},
\qquad 
\mathbf{x}\in\mathbb{R}_+^2,\; t>0,
\label{eq:model}
\end{equation}
subject to the initial condition
\begin{equation}
u(\mathbf{x},0) = u_{in}(\mathbf{x}) \ge 0.
\label{eq:model_ic}
\end{equation}
For brevity, we employ the notation
\[
\int_{\mathbf{x}}^{\infty} (\cdot)\, d\mathbf{y}
:=
\int_{x_1}^{\infty} \int_{x_2}^{\infty} (\cdot)\, dy_2\,dy_1.
\]
Equation \eqref{eq:model} naturally separates into two competing mechanisms: 
a \emph{birth (gain)} term representing the creation of smaller fragments 
from larger parent particles, and a \emph{death (loss)} term describing the removal of particles that undergo breakage.

The model ingredients are interpreted as follows:
\begin{itemize}
    \item ${\Gamma}(\mathbf{x}) \ge 0$ is the \emph{selection (breakage) rate}, 
    quantifying how frequently particles of size $\mathbf{x}$ fragment.

    \item ${\beta}(\mathbf{x}\mid\mathbf{y}) \ge 0$ is the \emph{fragmentation kernel}, 
    which characterizes the distribution of daughter particles of size 
    $\mathbf{x}$ produced from a parent particle of size $\mathbf{y}$.
\end{itemize}
To ensure physical consistency, the kernel satisfies the following structural properties:
\begin{enumerate}
    \item \textbf{No oversize fragments.}   Fragments cannot exceed the size of their parent: $ {\beta}(\mathbf{x}\mid\mathbf{y}) = 0, \text{whenever } x_1 > y_1 \ \text{or}\ x_2 > y_2.$

    \item \textbf{Multiplicity of fragments.}  
    Each breakage event generates more than one daughter particle:
    \begin{equation}
    \int_{(0,y_1)\times(0,y_2)}
    {\beta}(\mathbf{x}\mid\mathbf{y})\, d\mathbf{x}
    =
    \nu(\mathbf{y}) > 1,
    \qquad \forall\, \mathbf{y}\in\mathbb{R}_+^2.
    \label{eq:multiplicity}
    \end{equation}
\item \textbf{Conservation of additive quantities.}  
    If $x_1$ and $x_2$ represent additive properties (such as mass or volume components), 
    the total quantity is preserved during fragmentation:
    \begin{equation}
    \int_{(0,y_1)\times(0,y_2)}
    (x_1+x_2)\, {\beta}(\mathbf{x}\mid\mathbf{y})\, d\mathbf{x}
    =
    y_1 + y_2.
    \label{eq:mass}
    \end{equation}
\end{enumerate}
Together, these assumptions ensure that the model captures the essential 
features of a pure breakage process: particles split into smaller fragments, 
their total number increases, and conserved physical quantities remain invariant.

Apart from the number density function itself, some integral properties, such as the moments, are helpful to understand a distribution. For a distribution function $u(x_1, x_2, t)$, the $(r, s)$th moment is defined as
\begin{equation}
M_{r,s}(t) = \int_{0}^{\infty} \int_{0}^{\infty} x_1^r x_2^s u(x_1, x_2, t) \, dx_2 \, dx_1 \quad \text{where} \quad r, s \geq 0.
\end{equation}
Some initial instances of moments are extremely intriguing. The zeroth moment ($r = 0, s = 0$) represents the total number of particles in the system, and the total mass of the system is denoted by the sum of first moments ($M_{1,0} + M_{0,1}$), this property remains invariant throughout the entire process. Furthermore, it is easy to obtain
\begin{equation}
\frac{dM_{0,0}}{dt} = \int_{0}^{\infty} \int_{0}^{\infty} (\nu(y_1, y_2) - 1) {\Gamma}(y_1, y_2) u(y_1, y_2, t) \, dy_2 \, dy_1,
\end{equation}
where $\nu(y_1, y_2)$ is the number of fragments produced from a parent of size $(y_1, y_2)$.
\begin{equation}
\frac{d}{dt}(M_{1,0} + M_{0,1}) = 0.
\end{equation}
As expected due to the breakage event, clearly, $\frac{dM_{0,0}}{dt} > 0$, this follows from the fact that ${\Gamma}(y_1, y_2)$, $u(y_1, y_2, t)$ and $(\nu(y_1, y_2) - 1)$ are all positive.

\subsection{The state of the art}
In the literature, various numerical methods are available to accurately predict moments in population balance equations. These include the sectional methods such as the cell average technique \cite{kumar2008cell} and the fixed pivot technique \cite{leong2023comparative}, the method of moments \cite{falola2013extended}, Monte Carlo simulations \cite{bhoi2019sonofragmentation}, and finite volume methods \cite{Singh2022, singh2022finite, Kumar2011}. The main drawback of the cell-average technique is its high computational cost, as it requires recomputing the birth term after redistributing particle properties to neighboring nodes \cite{singh2022new}. In FPT, artificial generation of fines occurs \cite{singh2022challenges}. Moment methods are useful only for low-order moments, but they cannot reconstruct the complete distribution without additional assumptions or closure relations. Finite volume methods are widely used for their simplicity and efficiency, but FVMs are only preferred when a small number of moments are needed, i.e., up to two \cite{leong2023comparative}. Also, FVMs face limitations in multidimensional settings \cite{Saha2008, Singh2022}. Generally, FVM can give oscillatory solutions whenever large grid sizes are used. Monte Carlo methods simulate the stochastic evolution of individual particles and are well-suited for handling multidimensional distributions; however, they often exhibit low accuracy and produce noisy results. Also, the computational cost of the Monte Carlo method is extremely high \ \cite{fan2017direct}. Existing FEM approaches often relied on coarse numerical approximations when evaluating integral terms in these types of models. The finite element method for the one-dimensional fragmentation equation provides a systematic framework for tracking the evolution of particle size \cite{sangwan2026priori}, but this approach is still insufficient when capturing the multidimensional dynamics of particulate systems is required. The least-squares method can also be used to solve population balance equations, but when a Lagrange multiplier is introduced to enforce mass conservation, positive definiteness can be lost \cite{zhu2010mass}. The discontinuous Galerkin method preserves positive definiteness, but its computational cost is relatively high \cite{xu2026conservative}.

\subsection{Contributions of the Present Work}
Unlike many existing techniques that lack a continuous solution across boundaries (cell or control-volume), the finite element method provides a globally continuous approximation by employing $C^0$ finite element spaces. The finite element method is preferred in this work, utilizing its mathematically rigorous, weak (variational) formulation to address these challenges effectively. In this study, we focus on multidimensional fragmentation. Two-dimensional linear fragmentation equations are mathematical models that help to describe the breakup of a population of objects, characterized by two internal properties, into smaller fragments over time. Sometimes, a single characteristic may be insufficient to fully capture the dynamics of the fragmentation process; in such cases, multidimensional equations are vital. Therefore, 2D fragmentation enables accurate predictions of particle size distributions and moment evolution. We developed a higher-order FEM framework with the following key features:

\vspace{-0.3cm}

\begin{enumerate}

\item [(i)] We have used high-order Lagrange elements as a basis function, which can accurately capture complex particle properties. Also, we have handled the loss (death) and gain (birth) terms with consistent numerical integration within the variational formulation (with the restriction that fragments cannot exceed the size of their parent), avoiding the use of improvised methods. 

\item[(ii)] We have derived semi-discrete and fully discrete formulations (with BDF2). These formulations establish the theoretical convergence bounds for the proposed scheme.

\item[(iii)] Our formulation ensures that physical properties, such as mass and moments, are strictly conserved. By this approach, higher-order moment accuracy is maintained and also improved by refining the mesh.

\item [(iv)] For the reliability of the framework, we present the comparison of the numerical solution obtained against the exact solution available or the exact moments available for both 2D and 3D examples.
\end{enumerate}

\noindent{\bf Organization of the paper}: This is how this paper is organized. We truncate the domain for the numerical approximation, followed by the derivation of the weak form for the FEM framework in Section~\ref{sec:2}. We explore the mass conservation property and derive the stability result and error estimate for the semi-discrete finite element method in Section~\ref{sec:3}. In Section~\ref{sec:4}, the error estimate and stability result for the fully-discrete algorithm are presented. Section~\ref{sec:5} is dedicated to the numerical validation of the proposed scheme for both 2D and 3D linear fragmentation equations.  Finally, in Section \ref{sec:6}, we conclude the article.

\vspace{-0.8cm}

\section{Finite Element Approximation} \label{sec:2}
\vspace{-0.26cm}

In this section, we construct a Galerkin finite element approximation of the truncated fragmentation model posed on a bounded computational domain.

\noindent \textbf {Continuous weak formulation:}  The fragmentation model is naturally posed on the unbounded domain 
$\mathbb{R}_+^2 = (0,\infty)\times(0,\infty)$. 
However, for numerical approximation, computations must be carried out 
on a bounded region. Therefore, we introduce the truncated domain
$
 \mathcal{D} := (0,x_{1\max}] \times (0,x_{2\max}],
\; 
0 < x_{1\max},\, x_{2\max} < \infty,
$
and restrict the analysis to a finite time interval $[0,T]$ with $T<\infty$.

\noindent On $ \mathcal{D} \times (0,T]$, the truncated fragmentation problem reads:
Find $u(\mathbf{x},t)$ such that
\begin{equation}
\partial_t u(\mathbf{x},t)
+ {\Gamma}(\mathbf{x})\,u(\mathbf{x},t)
=
\int_{\mathbf{x}}^{\mathbf{x}_{\max}}
{\beta}(\mathbf{x}\mid\mathbf{y})\,
{\Gamma}(\mathbf{y})\,
u(\mathbf{y},t)\,
d\mathbf{y},
\qquad 
\mathbf{x}\in \mathcal{D},\; t>0,
\label{eq:truncated_model}
\end{equation}
with initial condition
\begin{equation}
u(\mathbf{x},0)=u_{in}(\mathbf{x}),
\qquad \mathbf{x}\in \mathcal{D},
\label{eq:truncated_ic}
\end{equation}
where
\[
\int_{\mathbf{x}}^{\mathbf{x}_{\max}} (\cdot)\, d\mathbf{y}
:=
\int_{x_1}^{x_{1\max}}
\int_{x_2}^{x_{2\max}}
(\cdot)\, dy_2\,dy_1.
\]
To ensure that problem \eqref{eq:truncated_model}--\eqref{eq:truncated_ic} 
is mathematically meaningful and suitable for numerical analysis, 
we impose the following regularity assumptions: The selection rate ${\Gamma}(\mathbf{x})$ is assumed to be essentially bounded and nonnegative on $ \mathcal{D}$, that is, $a \in L^\infty( \mathcal{D})$ with $0 \le {\Gamma}(\mathbf{x}) \le a_0 < \infty$ for almost every $\mathbf{x} \in  \mathcal{D}$. 
Moreover, the weighted fragmentation kernel is essentially bounded on $ \mathcal{D} \times  \mathcal{D}$, namely ${\beta}(\mathbf{x}\mid\mathbf{y})\,{\Gamma}(\mathbf{y}) \in L^\infty( \mathcal{D} \times  \mathcal{D})$ and $|{\beta}(\mathbf{x}\mid\mathbf{y})\,{\Gamma}(\mathbf{y})| \le b_0$ for some constant $b_0 > 0$ and the initial number density satisfies $u_{in} \in L^2( \mathcal{D})$ and $u_{in} \ge 0$ almost everywhere in $ \mathcal{D}$. Under these conditions, the integral operator on the right-hand side 
of \eqref{eq:truncated_model} is well-defined and bounded in $L^2( \mathcal{D})$,  providing a stable functional framework for subsequent discretization of finite elements.

\noindent\textbf{Weak formulation:}  
Find $u:(0,T]\to V:=H^1_0( \mathcal{D})$ such that
\begin{equation}
(\partial_t u,\varphi)
+ \mathcal{A}(u,\varphi)
=
\mathcal{B}(u,\varphi),
\qquad 
\forall \varphi\in V,\; t>0,
\label{eq:weak_form}
\end{equation}
with $u(0)=u_{in}$, where
\begin{align*}
(\partial_t u,\varphi)
&:= \int_{ \mathcal{D}} \partial_t u(\mathbf{x},t)\,\varphi(\mathbf{x})\, d\mathbf{x}, \;\;
\mathcal{A}(u,\varphi)
:= \int_{ \mathcal{D}} {\Gamma}(\mathbf{x})\,u(\mathbf{x},t)\,\varphi(\mathbf{x})\, d\mathbf{x}, \\
\mathcal{B}(u,\varphi)
&:= \int_{ \mathcal{D}}
\varphi(\mathbf{x})
\left(
\int_{\mathbf{x}}^{\mathbf{x}_{\max}}
{\beta}(\mathbf{x}\mid\mathbf{y})\,
{\Gamma}(\mathbf{y})\,
u(\mathbf{y},t)\,
d\mathbf{y}
\right)
d\mathbf{x}.
\end{align*}

\noindent\textbf{Computational domain and mesh:}
Let  $ \mathcal{D} := (0,x_{1}^{\star}] \times (0,x_{2}^{\star}]
$ denote the truncated configuration domain in the particle property space. 
We consider a conforming and shape-regular triangulation 
$
\mathscr{T}_h = \{{\cal K}\}
$ of $\mathcal{D}$, consisting of non-overlapping closed triangular elements ${\cal K}$, such that
$
\overline{\mathcal{D}} = \bigcup_{{\cal K} \in \mathscr{T}_h} \overline{{\cal K}}.
$
For any two distinct elements ${\cal K}_1, {\cal K}_2 \in \mathscr{T}_h$, their intersection is either empty or a common vertex or edge, ensuring conformity of the mesh. For each element ${\cal K} \in \mathscr{T}_h$, let $\delta_{\cal K} := \mathrm{diam}({\cal K})$ denote its diameter. 
The global discretization parameter is defined as
$
h := \max_{{\cal K} \in \mathscr{T}_h} \delta_{\cal K}.
$ We assume that the mesh family $\{\mathscr{T}_h\}_{h>0}$ satisfies the standard regularity condition: there exist positive constants $\gamma_1$ and $\gamma_2$, independent of $h$, such that
$
\gamma_1 h \le \delta_{\cal K} \le \gamma_2 h,
\quad \forall {\cal K} \in \mathscr{T}_h.
$ This assumption guarantees uniform shape regularity of the elements. 
Consequently, the refinement process corresponds to the asymptotic regime $h \to 0$, leading to increasingly accurate spatial resolution.

\medskip
\noindent\textbf{Finite element approximation space:}
Associated with the triangulation $\mathscr{T}_h$, we introduce the finite-dimensional space
\[
W_h := \left\{ w_h \in C^0(\overline{\mathcal{D}}): 
w_h|_{K} \in \mathscr{P}_r(K), \ \forall K \in \mathcal{K}_h \right\},
\]
where $ \mathscr{P}_r(K)$ denotes the space of polynomials of degree at most $r$ on the element $K$.

Let $\{ \phi_i(\mathbf{x}) \}_{i=1}^{M}$ be the nodal basis of $W_h$, associated with the mesh nodes $\{N_i\}_{i=1}^{M}$. These basis functions satisfy $\phi_i(N_j) = \delta_{ij}, \quad 1 \le i,j \le M, $ and each $\phi_i$ has compact support restricted to the patch of elements sharing the node $N_i$.

\vspace{-0.34cm}
\section{Semi-discrete Galerkin Approximation}\label{sec:3}
\vspace{-0.24cm}

Let $\{\mathscr{T}_h\}_{h \to 0}$ be a quasi-uniform family of triangulations of $\mathcal{D}$ and 
$W_h \subset H_0^1(\mathcal{D} )$ be a finite element space spanned by the basis functions  $\{\varphi_i(x_1,x_2)\}_{i=1}^{M}.$
We approximate the exact solution $u(\mathbf{x},t)$ by the finite-dimensional expansion
\[
u_h(\mathbf{x},t) = \sum_{i=1}^{M} \alpha_i(t)\,\varphi_i(\mathbf{x}),
\]
where $\alpha_i(t)$ are unknown time-dependent coefficients and 
$
\boldsymbol{\alpha}(t) = (\alpha_1(t),\dots,\alpha_M(t))^{\top}
$ denotes the coefficient vector.

The semi-discretized finite element method is as follows: Given 
 $u_{0,h} \in W_h$, 
find $u_h \in L^2(0,T;W_h)$ such that
\begin{eqnarray}
(\partial_t u_h, \phi_h)
+ \mathcal{A}(u_h,\phi_h)
=
\mathcal{B}(u_h,\phi_h),
\qquad \forall \phi_h \in W_h,\; t>0, \label{semi1}
\end{eqnarray}
subject to the initial condition
\begin{eqnarray}
  u_h(0) = \mathcal{P}_h u_{\mathrm{in}}, \label{semi2}
\end{eqnarray}
where $(\cdot,\cdot)$ denotes the $L^2(D)$ inner product and 
$\mathcal{P}_h$ is an appropriate projection onto $W_h$.

\noindent
\textbf{Algebraic structure.}
Inserting the finite element expansion of $u_h$ and selecting 
$\phi_h=\varphi_j$, $j=1,\dots,M$, we obtain the system
\begin{eqnarray}
 \sum_{i=1}^{M} (\varphi_i,\varphi_j)\,\frac{d\alpha_i}{dt}
+
\sum_{i=1}^{M} \mathcal{A}(\varphi_i,\varphi_j)\,\alpha_i
=
\sum_{i=1}^{M} \mathcal{B}(\varphi_i,\varphi_j)\,\alpha_i,
\qquad j=1,\dots,M. \label{semi3}
   \end{eqnarray}

This yields the matrix formulation
\[
\mathbf{M}\,\boldsymbol{\alpha}'(t)
+
\mathbf{A}\,\boldsymbol{\alpha}(t)
=
\mathbf{B}\,\boldsymbol{\alpha}(t),
\]
or equivalently,
\[
\mathbf{M}\,\boldsymbol{\alpha}'(t)
=
(\mathbf{B}-\mathbf{A})\,\boldsymbol{\alpha}(t).
\]
\textbf{Definition of the matrices.}
\begin{itemize}
\item The \emph{mass matrix} $\mathbf{M} \in \mathbb{R}^{M\times M}$ is defined by
\[
\mathbf{M}_{ji}
=
(\varphi_i,\varphi_j)
=
\int_{D} \varphi_i(x_1,x_2)\,\varphi_j(x_1,x_2)\,dx_1dx_2.
\]
It is symmetric and positive definite due to the linear independence of the basis functions.

\item The \emph{selection (loss) matrix} $\mathbf{A}$ is given by
\[
\mathbf{A}_{ji}
=
\int_{D} {\Gamma}(x_1,x_2)\,
\varphi_i(x_1,x_2)\,
\varphi_j(x_1,x_2)\,dx_1dx_2,
\]
where ${\Gamma}(x_1,x_2)$ denotes the breakage rate.

\item The \emph{fragmentation (gain) matrix} $\mathbf{B}$ is defined as
\[
\mathbf{B}_{ji}
=
\int_{D}
\varphi_j(x_1,x_2)
\left(
\int_{x_1}^{x_{1\max}}
\int_{x_2}^{x_{2\max}}
{\beta}(x_1,x_2,y_1,y_2)\,
{\Gamma}(y_1,y_2)\,
\varphi_i(y_1,y_2)\,
dy_2dy_1
\right)
dx_2dx_1,
\]
where ${\beta}(x_1,x_2,y_1,y_2)$ denotes the fragmentation kernel.
\end{itemize}
\noindent
\textbf{Existence of the semi-discrete solution.}

Since the mass matrix $\mathbf{M}$ is symmetric and positive definite, it is invertible. Consequently, the semi-discrete system can be rewritten as
\[
\boldsymbol{\alpha}'(t)
=
\mathbf{M}^{-1}(\mathbf{B}-\mathbf{A})\,\boldsymbol{\alpha}(t).
\]
This represents a finite-dimensional system of linear ordinary differential equations with continuous coefficients. By classical ODE theory, for any prescribed initial vector 
$\boldsymbol{\alpha}(0)=\boldsymbol{\alpha}^0$, 
there exists a unique solution
$\boldsymbol{\alpha}(t) \in C^{1}([0,T];\mathbb{R}^{M}).
$ Hence, the semi-discrete Galerkin approximation $u_h(t)$ exists for all $t \in [0,T]$. Uniqueness follows directly from the system's stability properties. 

\vspace{-0.34cm}
\subsection*{Stability}
\vspace{-0.24cm}

We establish the stability of the semi-discrete finite element approximation
in the following lemma.

\begin{lemma}[Stability of the Semi-discrete Scheme]
Let $u_h(t) \in W_h$ be the semi-discrete Galerkin solution satisfying \eqref{semi1}
with initial data $u_h(0) \in W_h$.   Then the semi-discrete solution satisfies the stability estimate
\[
\|u_h(t)\| \le \|u_h(0)\| e^{b_0 t},
\qquad \forall t \in [0,T].
\]
\end{lemma}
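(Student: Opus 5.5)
The plan is the standard energy argument: take the discrete solution itself as the test function and close with Gronwall's inequality. Since $u_h(t)\in W_h$ for every $t$, I will choose $\phi_h=u_h(t)$ in \eqref{semi1}. Because $(\partial_t u_h,u_h)=\tfrac12\frac{d}{dt}\|u_h\|^2$, this gives
\[
\frac12\frac{d}{dt}\|u_h(t)\|^2+\int_{\mathcal D}\Gamma(\mathbf x)\,u_h^2(\mathbf x,t)\,d\mathbf x=\mathcal B(u_h,u_h).
\]
By the standing assumption $\Gamma\ge 0$, the loss term $\mathcal A(u_h,u_h)$ is nonnegative and can be dropped. This leaves the differential inequality $\tfrac12\frac{d}{dt}\|u_h\|^2\le \mathcal B(u_h,u_h)$.

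The main step is bounding the gain term $\mathcal B(u_h,u_h)$ by a multiple of $\|u_h\|^2$. I will use only the bound $|\beta(\mathbf x\mid\mathbf y)\Gamma(\mathbf y)|\le b_0$ and enlarge the inner integration region from $[\mathbf x,\mathbf x_{\max}]$ to all of $\mathcal D$. This gives
\[
|\mathcal B(u_h,u_h)|\le b_0\int_{\mathcal D}|u_h(\mathbf x)|\,d\mathbf x\int_{\mathcal D}|u_h(\mathbf y)|\,d\mathbf y=b_0\|u_h\|_{L^1(\mathcal D)}^2\le b_0\,|\mathcal D|\,\|u_h\|^2,
\]
where the last step is Cauchy--Schwarz. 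An equivalent route is Cauchy--Schwarz in $(\mathbf x,\mathbf y)$, which bounds the integral operator's $L^2$ norm by its Hilbert--Schmidt norm $b_0|\mathcal D|$.

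I expect the only real subtlety to be this factor $|\mathcal D|=x_{1\max}x_{2\max}$. To match the stated rate $e^{b_0 t}$ exactly, I will either interpret $b_0$ as the bound on the Hilbert--Schmidt norm of the kernel $\beta\Gamma$ on $\mathcal D\times\mathcal D$, or normalize the domain so that $|\mathcal D|\le1$. In either case $|\mathcal B(u_h,u_h)|\le b_0\|u_h\|^2$ with $b_0$ independent of $h$. Otherwise the same proof goes through with $b_0$ replaced by $b_0|\mathcal D|$, and I would state that explicitly.

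With this bound, $\frac{d}{dt}\|u_h(t)\|^2\le 2b_0\|u_h(t)\|^2$. Gronwall's inequality applied to the squared norm gives $\|u_h(t)\|^2\le e^{2b_0t}\|u_h(0)\|^2$, and taking square roots yields the claim for all $t\in[0,T]$. Working with the squared norm avoids dividing by $\|u_h\|$, which may vanish. The regularity needed for Gronwall comes from $\boldsymbol\alpha\in C^1([0,T];\mathbb R^M)$, established in the existence discussion above.
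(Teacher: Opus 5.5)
Your argument is the paper's energy argument (test \eqref{semi1} with $u_h$, drop $\mathcal{A}(u_h,u_h)\ge 0$, bound the gain term, apply Gr\"onwall to $\|u_h\|^2$). You are more careful than the paper at the one nontrivial step. The paper invokes ``the assumed bounds on $\mathcal{A}$ and $\mathcal{B}$'' to write $\mathcal{B}(u_h,u_h)\le b_0\|u_h\|^2$, but the only stated hypothesis is the pointwise bound $|\beta\Gamma|\le b_0$. You correctly show that this yields only $b_0|\mathcal{D}|\,\|u_h\|^2$.

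Your caveat is genuinely needed and is not an artifact of a crude estimate. Take $\beta(\mathbf{x}\mid\mathbf{y})=2/(y_1y_2)$ and $\Gamma(\mathbf{y})=\gamma y_1y_2$ on a rectangle, so that $\beta\Gamma\equiv b_0=2\gamma$ and mass conservation and multiplicity both hold. Then $(\mathcal{B}-\mathcal{A})(1,1)=\tfrac{b_0|\mathcal{D}|}{8}\|1\|^2$. Hence for $u_h(0)$ equal or $L^2$-close to $1$, the bound $e^{b_0t}$ fails for small $t>0$ as soon as $|\mathcal{D}|>8$. The lemma should therefore be stated with $b_0|\mathcal{D}|$, or with $b_0$ reinterpreted as a Hilbert--Schmidt (or operator-norm) bound for the gain term, as you propose.

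Drop the ``normalize the domain'' option, though. The substitution $\mathbf{x}=\lambda\tilde{\mathbf{x}}$ multiplies the kernel bound by $\lambda^2$ and divides $|\mathcal{D}|$ by $\lambda^2$. So $b_0|\mathcal{D}|$ is scale invariant, and rescaling gains nothing.
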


\begin{proof}
Choosing $\phi_h = u_h$ in the semidiscrete formulation \eqref{semi1} gives
\[
(\partial_t u_h,u_h) + {\cal A}(u_h,u_h) = {\cal B}(u_h,u_h).
\]
We can rewrite as
\[
\frac{1}{2}\frac{d}{dt}\|u_h\|^2 + {\cal A}(u_h,u_h)
= {\cal B}(u_h,u_h).
\]
Applying the assumed bounds on ${\cal A}$ and ${\cal B}$, we directly deduce
\[
\frac{1}{2}\frac{d}{dt}\|u_h\|^2
\le b_0 \|u_h\|^2.
\]
Integrating this differential inequality over $[0,t]$ yields
\[
\|u_h(t)\|^2 \le \|u_h(0)\|^2 e^{2b_0 t},
\]
This completes the proof.
\end{proof}
\subsection*{Mass Conservation}
To establish mass conservation, we choose
\(\phi_j(x_1,x_2)=x_1+x_2\) in equation \eqref{semi3}, and we obtain
\[
\sum_{i=0}^{N} \frac{d\alpha_i}{dt}(\phi_i,x_1+x_2)
+ \sum_{i=0}^{N} \alpha_i\,{\cal A}(\phi_i,x_1+x_2)
=
\sum_{i=0}^{N} \alpha_i\,{\cal B}(\phi_i,x_1+x_2).
\]
Let \(D=(0,x_{1\max})\times(0,x_{2\max})\).
Using the definition of the inner product, we obtain
\begin{align*}
\sum_{i=0}^{N} \frac{d\alpha_i}{dt}
\int_D (x_1+x_2)\phi_i\,dx
=
\sum_{i=0}^{N} \alpha_i I_i^{\text{gain}}
-
\sum_{i=0}^{N} \alpha_i I_i^{\text{loss}},
\end{align*}
where
\[
I_i^{\text{gain}}
=
\int_D (x_1+x_2)
\int_{x_1}^{x_{1\max}}
\int_{x_2}^{x_{2\max}}
{\beta}(x_1,x_2,y_1,y_2)
{\Gamma}(y_1,y_2)
\phi_i(y_1,y_2)\,dy_2dy_1\,dx,
\]
\[
I_i^{\text{loss}}
=
\int_D (x_1+x_2)
{\Gamma}(x_1,x_2)
\phi_i(x_1,x_2)\,dx.
\]
Changing the order of integration and using property \eqref{eq:mass}, the gain and loss
terms cancel exactly, yielding
\[
\sum_{i=0}^{N} \frac{d\alpha_i}{dt}
\int_D (x_1+x_2)\phi_i\,dx = 0.
\]
Since \(u_h=\sum_{i=0}^{N}\alpha_i\phi_i\), it follows that
\[
\frac{d}{dt}
\int_D (x_1+x_2)u_h\,dx = 0.
\]
Hence, the total mass remains constant in time. The semi-discrete scheme
therefore preserves the fundamental mass invariant of the continuous model.

\subsection*{Number Preservation}
To analyze the particle number evolution, we take \(\phi_j(x_1,x_2)=1\) in equation \eqref{semi3}.
Substituting into the weak formulation gives
\begin{align*}
\sum_{i=0}^{N} \frac{d\alpha_i}{dt}
\int_D \phi_i\,dx
=
\sum_{i=0}^{N} \alpha_i J_i^{\text{gain}}
-
\sum_{i=0}^{N} \alpha_i J_i^{\text{loss}},
\end{align*}
where
\[
J_i^{\text{gain}}
=
\int_D
\int_{x_1}^{x_{1\max}}
\int_{x_2}^{x_{2\max}}
{\beta}(x_1,x_2,y_1,y_2)
{\Gamma}(y_1,y_2)
\phi_i(y_1,y_2)\,dy_2dy_1\,dx,
\]
\[
J_i^{\text{loss}}
=
\int_D
{\Gamma}(x_1,x_2)
\phi_i(x_1,x_2)\,dx.
\]
Reordering the integrals and using property \eqref{eq:multiplicity}, we obtain
\[
\sum_{i=0}^{N} \frac{d\alpha_i}{dt}
\int_D \phi_i\,dx
=
\sum_{i=0}^{N} \alpha_i
\int_D (\nu(y_1,y_2)-1)
{\Gamma}(y_1,y_2)
\phi_i(y_1,y_2)\,dy.
\]
Since \((\nu(y_1,y_2)-1)>0\), \({\Gamma}(y_1,y_2)>0\), and \(\phi_i\ge0\),
the right-hand side is nonnegative. Therefore,
\[
\frac{d}{dt}\int_D u_h\,dx \ge 0.
\]
Thus, while fragmentation increases the total number of particles,
the numerical scheme reproduces this growth exactly and introduces
no artificial loss or gain. The method is therefore both mass-conservative
and structurally consistent with the continuous fragmentation dynamics.

\subsection*{A Priori Error Estimate for the Semi-discrete Scheme}
We now derive an a priori error estimate for the semi-discrete Galerkin
approximation. The result quantifies the accuracy of the finite element
solution in terms of the mesh parameter $h$ and the regularity of the exact solution.

We recall the following standard result regarding the $L^2$ projection . Let ${\cal P}_h : L^2( \mathcal{D}) \to W_h$ denote the {$L^2$ projection} defined by
\[
({\cal P}_h u , \phi_h) = (u , \phi_h), \quad \forall \phi_h \in W_h.
\]
\begin{lemma}[Error Estimate for $L^2$ projection]\cite{thomee2007galerkin}\label{lem:ritz}
Let $\mathcal{P}_h$ denote the $L^2$ projection operator onto the finite element space $W_h$, which consists of piecewise polynomials of degree $r$ or less.   If  $ u \in H^{s+1}( \mathcal{D}), \quad 1 \le s \le r, $
then the following error estimate holds:
\begin{equation}\label{eq:ritz-error}
\| u - {\cal P}_h u \| \le C \, h^{s+1} \, \| u \|_{H^{s+1}( \mathcal{D})},
\end{equation}
where $h$ represents the mesh size and $C$ is a positive constant independent of $h$.
\end{lemma}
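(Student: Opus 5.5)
The plan is to combine the best-approximation property of the $L^2$ projection with a standard interpolation estimate. Since $\mathcal{P}_h$ is the orthogonal projection of $L^2(\mathcal{D})$ onto the closed subspace $W_h$, the defining relation $(\mathcal{P}_h u,\phi_h)=(u,\phi_h)$ for all $\phi_h\in W_h$ gives $u-\mathcal{P}_h u\perp W_h$. A Pythagorean argument then yields
\[
\|u-\mathcal{P}_h u\| \le \|u-\chi\| \qquad \forall \chi\in W_h.
\]
It therefore suffices to exhibit a single $\chi\in W_h$ with $\|u-\chi\|\le C h^{s+1}\|u\|_{H^{s+1}(\mathcal{D})}$.

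For $\chi$ I would take the nodal Lagrange interpolant $I_h u$ of degree $r$ associated with the basis $\{\phi_i\}$. This is well defined because $s+1\ge 2>d/2$ for $d=2$ (and $d=3$), so $H^{s+1}(\mathcal{D})\hookrightarrow C^0(\overline{\mathcal{D}})$. On each element $\mathcal{K}$, I would apply the Bramble--Hilbert lemma on the reference triangle. The interpolation operator preserves $\mathscr{P}_s\subset\mathscr{P}_r$ because $s\le r$, so it reproduces polynomials of degree $\le s$ and we get $|\hat u-\hat I\hat u|_{L^2(\hat{\mathcal{K}})}\le C|\hat u|_{H^{s+1}(\hat{\mathcal{K}})}$. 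The affine scaling argument together with the shape-regularity assumption $\gamma_1 h\le\delta_{\mathcal{K}}\le\gamma_2 h$ then converts this to $\|u-I_h u\|_{L^2(\mathcal{K})}\le C\delta_{\mathcal{K}}^{s+1}|u|_{H^{s+1}(\mathcal{K})}$. Squaring, summing over $\mathcal{K}\in\mathscr{T}_h$, and taking square roots gives the global bound with $h^{s+1}$. The constant depends only on $r$, $s$ and the shape-regularity constants, not on $h$.

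The main subtlety is compatibility of $W_h$ with boundary conditions. In Section 3, $W_h$ is taken inside $H^1_0(\mathcal{D})$, whereas $u$ is only assumed to lie in $H^{s+1}(\mathcal{D})$. If $W_h$ carries homogeneous Dirichlet conditions, the interpolant of a function with nonzero trace is not in $W_h$, and the $O(h^{s+1})$ rate fails in general. I would handle this by working with the space $W_h$ exactly as defined in Section 2, namely continuous piecewise polynomials with no boundary constraint. Alternatively, one can state the additional hypothesis $u\in H^{s+1}(\mathcal{D})\cap H^1_0(\mathcal{D})$, in which case $I_h u\in W_h$ automatically. With either choice, the argument above goes through verbatim, which is consistent with the citation to \cite{thomee2007galerkin}, where the result is proved this way.
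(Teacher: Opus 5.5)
Your proof is correct and is the standard argument behind the paper's citation of Thomée (the paper gives no proof of its own). Best approximation of the $L^2$ projection reduces the claim to the interpolation bound $\|u-I_hu\|\le Ch^{s+1}\|u\|_{H^{s+1}(\mathcal{D})}$, and this follows from Bramble--Hilbert and affine scaling. Your boundary remark is also right: Thomée states the approximation property only for functions in $H^1_0(\mathcal{D})$ when the discrete space lies in $H^1_0(\mathcal{D})$, and for a function with nonzero trace the best $L^2$ approximation from such a space is only $O(h^{1/2})$, so one of your two fixes (the unconstrained $W_h$ of Section 2, or the extra hypothesis $u\in H^1_0(\mathcal{D})$) is genuinely needed.
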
 \label{ritz}
\begin{thm}[Semi-discrete Error Estimate]
Let $u$ be the exact solution of the continuous problem and 
$u_h \in W_h$ be the semi-discrete Galerkin solution.  
Assume
$
u,\, u_t \in L^\infty(0,T; H^{r+1}(D)),\; r \ge 1.
$
Then, for every $t \in [0,T]$, the following estimate holds:
\[
\|u(t) - u_h(t)\|
\le
C h^{r+1}
\left(
\|u(t)\|_{H^{(r+1)}}
+
\int_0^t \big( \|u(s)\|_{H^{r+1}} + \|u_t(s)\|_{H^{r+1}} \big)\, ds
\right),
\]
where $C>0$ is independent of $h$.
\end{thm}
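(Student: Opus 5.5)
We split the error with the $L^2$ projection $\mathcal{P}_h$ of Lemma~\ref{lem:ritz}:
\[
u - u_h = (u - \mathcal{P}_h u) + (\mathcal{P}_h u - u_h) =: \rho + \theta,
\]
with $\rho(t)=u(t)-\mathcal{P}_h u(t)$ and $\theta(t)=\mathcal{P}_h u(t)-u_h(t)\in W_h$. By Lemma~\ref{lem:ritz} with $s=r$, $\|\rho(t)\|\le Ch^{r+1}\|u(t)\|_{H^{r+1}}$, and this accounts for the first term in the estimate. The initial condition \eqref{semi2} gives $\theta(0)=\mathcal{P}_h u_{in}-u_h(0)=0$. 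It therefore suffices to bound $\|\theta(t)\|$ by $Ch^{r+1}\int_0^t(\|u\|_{H^{r+1}}+\|u_t\|_{H^{r+1}})\,ds$.

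To derive the error equation for $\theta$, we subtract \eqref{semi1} from the weak form \eqref{eq:weak_form} restricted to $\phi_h\in W_h$. This gives Galerkin orthogonality,
\[
(\partial_t(u-u_h),\phi_h)+\mathcal{A}(u-u_h,\phi_h)=\mathcal{B}(u-u_h,\phi_h).
\]
Because $\mathcal{P}_h$ commutes with $\partial_t$, we have $(\partial_t\rho,\phi_h)=0$ for all $\phi_h\in W_h$. Hence
\[
(\partial_t\theta,\phi_h)+\mathcal{A}(\theta,\phi_h)=\mathcal{B}(\theta,\phi_h)-\mathcal{A}(\rho,\phi_h)+\mathcal{B}(\rho,\phi_h).
\]
We choose $\phi_h=\theta$ and use the bounds behind the stability lemma: $\mathcal{A}(\theta,\theta)\ge0$ since $\Gamma\ge0$, $|\mathcal{A}(\rho,\theta)|\le a_0\|\rho\|\|\theta\|$, and by Cauchy--Schwarz on the bounded domain with $|\beta\Gamma|\le b_0$, $|\mathcal{B}(v,w)|\le b_0|\mathcal{D}|\,\|v\|\|w\|$. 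Absorbing $|\mathcal{D}|$ into the constant, these give
\[
\tfrac12\tfrac{d}{dt}\|\theta\|^2\le C\|\theta\|^2+C\|\rho\|\,\|\theta\|.
\]

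We convert this to a linear inequality for $\|\theta\|$ by dividing by $\|\theta\|$, or more rigorously by working with $\sqrt{\|\theta\|^2+\epsilon}$ and letting $\epsilon\to0$. This yields $\frac{d}{dt}\|\theta\|\le C\|\theta\|+C\|\rho\|$. Gronwall's inequality with $\theta(0)=0$, and absorbing $e^{CT}$ into $C$, then gives
\[
\|\theta(t)\|\le C\int_0^t\|\rho(s)\|\,ds\le Ch^{r+1}\int_0^t\|u(s)\|_{H^{r+1}}\,ds.
\]
The $\|u_t\|_{H^{r+1}}$ term in the statement is then not even needed. If one prefers to follow the classical Wheeler approach, with $(\rho_t,\theta)$ kept as a separate term, it would be bounded by $Ch^{r+1}\|u_t\|_{H^{r+1}}\|\theta\|$ using Lemma~\ref{lem:ritz} applied to $u_t$ (allowed since $u_t\in L^\infty(0,T;H^{r+1})$). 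That version produces exactly the stated right-hand side. Combining the bounds on $\theta$ and $\rho$ by the triangle inequality completes the proof.

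The main obstacle is the $L^2$-continuity of $\mathcal{B}$ on the truncated domain, that is, justifying $|\mathcal{B}(v,w)|\le C\|v\|\|w\|$ from $\beta\Gamma\in L^\infty$. We would prove it by Cauchy--Schwarz in $\mathbf{y}$, giving $|\int_{\mathbf{x}}^{\mathbf{x}_{\max}}\beta\Gamma v\,d\mathbf{y}|\le b_0|\mathcal{D}|^{1/2}\|v\|$ pointwise in $\mathbf{x}$, followed by Cauchy--Schwarz in $\mathbf{x}$. A secondary point is making sure $\mathcal{P}_h$ commutes with the time derivative. This holds because $W_h$ is independent of $t$ and the regularity of $u_t$ lets us differentiate under the projection.
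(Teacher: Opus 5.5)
Your proposal is correct and follows the paper's route. You use the same splitting $u-u_h=(u-\mathcal{P}_h u)+(\mathcal{P}_h u-u_h)$, test the error equation with $\theta$, apply Gronwall with $\theta(0)=0$, and finish with the $L^2$-projection estimate and the triangle inequality. The only difference is that you observe $(\partial_t\rho,\phi_h)=(u_t-\mathcal{P}_h u_t,\phi_h)=0$ for all $\phi_h\in W_h$, whereas the paper keeps $-(\eta_t,\phi_h)$ and bounds it by $\|\eta_t\|\,\|\theta\|$ (your ``Wheeler'' variant). So you obtain a slightly sharper bound without the $\|u_t\|_{H^{r+1}}$ term, which implies the stated one, and you also correctly track the factor $|\mathcal{D}|$ in the continuity constant of $\mathcal{B}$ that the paper silently folds into $b_0$.
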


\begin{proof}
We decompose the error as
\[
u - u_h = (u - {\cal P}_h u) + ({\cal P}_h u - u_h)
= \eta + \theta,
\]
where
$\eta := u - {\cal P}_h u,
\;\;\theta := {\cal P}_h u - u_h.$

Subtracting the semi-discrete equation \eqref{semi1} for $u_h$ from the weak formulation \eqref{eq:weak_form}
satisfied by $u$, and using the definition of ${\cal P}_h$, we obtain
\[
(\theta_t,\phi_h) + {\cal A}(\theta,\phi_h)
=
{\cal B}(\theta,\phi_h)
+
{\cal B}(\eta,\phi_h)
-
(\eta_t,\phi_h)
-
{\cal A}(\eta,\phi_h),
\qquad \forall \phi_h \in W_h.
\]
taking $\phi_h = \theta$, we get
\[
(\theta_t,\theta) + {\cal A}(\theta,\theta)
=
{\cal B}(\theta,\theta)
+
{\cal B}(\eta,\theta)
-
(\eta_t,\theta)
-
{\cal A}(\eta,\theta).
\]
After simplification and 
together with the boundedness of ${\cal A}$ and ${\cal B}$, we obtain
\[
\frac{1}{2}\frac{d}{dt}\|\theta\|^2
\le
b_0 \|\theta\|^2
+
C\big(\|\eta\| + \|\eta_t\|\big)\|\theta\|.
\]
Hence,
\[
\frac{d}{dt}\|\theta\|
\le
b_0 \|\theta\|
+
C\big(\|\eta\| + \|\eta_t\|\big).
\]
Applying an integrating factor and integrating over $[0,t]$, we deduce
\[
\|\theta(t)\|
\le
e^{b_0 t}
\left(
\|\theta(0)\|
+
C \int_0^t
\big(
\|\eta(s)\| + \|\eta_t(s)\|
\big)\, ds
\right).
\]
Assuming the consistent initial condition $u_h(0)={\cal P}_h u(0)$,
we have $\theta(0)=0$, and therefore
\[
\|\theta(t)\|
\le
C e^{b_0 t}
\int_0^t
\big(
\|\eta(s)\| + \|\eta_t(s)\|
\big)\, ds.
\]
Using Lemma \ref{ritz}, we get
\[
\|\eta\| \le C h^{r+1} \|u\|_{r+1},
\qquad
\|\eta_t\| \le C h^{r+1} \|u_t\|_{r+1},
\]
we obtain
\[
\|\theta(t)\|
\le
C h^{r+1}
\int_0^t
\big(
\|u(s)\|_{r+1} + \|u_t(s)\|_{r+1}
\big)\, ds.
\]
Finally, by the triangle inequality,
\[
\|u(t)-u_h(t)\|
\le
\|\eta(t)\| + \|\theta(t)\|,
\]
which gives the desired estimate.
\end{proof}
\section{\large\textbf{Fully Discrete Error Analysis}}\label{sec:4}
\vspace{-0.4cm}

In this section, we derive the fully discrete formulation corresponding to the semi-discrete Galerkin approximation of the multidimensional fragmentation equation.
The spatial discretization is performed using the finite element space $W_h$,
while the temporal discretization is carried out using the BDF2 scheme. Let $T>0$ be the final time and let $N\in\mathbb{N}$ denote the number of time steps.
We define the uniform time step
\[
\tau=\frac{T}{N}, 
\qquad
t^n = n\tau, 
\qquad n=0,1,\dots,N.
\]
For a sufficiently smooth function 
$w:[0,T]\to L^2(D)$, we define
\[
w^n := w(\cdot,t^n),
\qquad
\delta_t w^n := \frac{w^n-w^{n-1}}{\tau},
\]
and the BDF2 operator
\[
D_t^{(2)} w^n
:=
\frac{3w^n-4w^{n-1}+w^{n-2}}{2\tau},
\qquad n\ge 2.
\]
The notation $w^n$ denotes the approximation at time level $t^n$,
while $D_t^{(2)} w^n$ provides a second-order approximation of
$\partial_t w(t^n)$.

\subsection*{BDF2 Fully Discrete Scheme}

The fully discrete finite element approximation seeks
$U_h^n \in W_h$ such that for all $\phi_h\in W_h$,
\begin{equation}
\label{FD-BDF2}
\begin{cases}
\big(D_t^{(2)} U_h^n,\phi_h\big)
+
{\cal A}(U_h^n,\phi_h)
=
{\cal B}(U_h^n,\phi_h),
& n\ge 2, \\[8pt]

\big(\delta_t U_h^1,\phi_h\big)
+
{\cal A}(u_h^1,\phi_h)
=
{\cal B}(U_h^1,\phi_h),
& n=1,\\[8pt]
U_h^0 = {\cal P}_hu_{\mathrm{in}},
\end{cases}
\end{equation}
The first time step is computed using the backward Euler method
to initialize the BDF2 scheme, ensuring second-order temporal accuracy
for $n\ge 2$. The scheme \eqref{FD-BDF2} constitutes the fully discrete finite element
approximation of the fragmentation model. In the subsequent analysis,
We establish its stability and derive error estimates under suitable
regularity assumptions on the exact solution.

\noindent
\textbf{Algebraic structure.}
Inserting the finite element expansion of $u_h^n$ and selecting 
$\phi_h=\varphi_j$, $j=1,\dots,M$, we obtain the system\\
For n=1,
\[
 \sum_{i=1}^{M} \frac{1}{\tau}(\varphi_i,\varphi_j)\,\alpha_i^n
+
\sum_{i=1}^{M} \mathcal{A}(\varphi_i,\varphi_j)\,\alpha_i^n
-
\sum_{i=1}^{M} \mathcal{B}(\varphi_i,\varphi_j)\,\alpha_i^n
= \sum_{i=1}^{M} \frac{1}{\tau}(\varphi_i,\varphi_j)\,\alpha_i^{n-1}
\qquad j=1,\dots,M.\quad 
\]

Therefore we get,
\[
\frac{\mathbf{M}}{\tau}\,\boldsymbol{\alpha}^n(t)
+
\mathbf{A}\,\boldsymbol{\alpha}^n(t)
-
\mathbf{B}\,\boldsymbol{\alpha}^n(t)
=\frac{1}{\tau}\,\boldsymbol{\alpha}^{n-1}(t)
\]
hence,
\begin{eqnarray}
(\frac{\mathbf{M}}{\tau}\
+
\mathbf{A}\
-
\mathbf{B})\,\boldsymbol{\alpha}^n(t)
=\frac{1}{\tau}\,\boldsymbol{\alpha}^{n-1}(t)\label{fully1}
   \end{eqnarray}
For $n\geq2$,
\[
\begin{aligned}
 \sum_{i=1}^{M} \frac{3}{2\tau}(\varphi_i,\varphi_j)\,\alpha_i^n
+
\sum_{i=1}^{M} \mathcal{A}(\varphi_i,\varphi_j)\,\alpha_i^n
-
\sum_{i=1}^{M} \mathcal{B}(\varphi_i,\varphi_j)\,\alpha_i^n
&= \sum_{i=1}^{M} \frac{2}{\tau}(\varphi_i,\varphi_j)\,\alpha_i^{n-1}  -\sum_{i=1}^{M} \frac{1}{2\tau}(\varphi_i,\varphi_j)\,\alpha_i^{n-2}, \\
&\qquad j = 1, \dots, M.
\end{aligned}
\]

This forms the matrix,
\[
\frac{\mathbf{3M}}{2\tau}\,\boldsymbol{\alpha}^n(t)
+
\mathbf{A}\,\boldsymbol{\alpha}^n(t)
-
\mathbf{B}\,\boldsymbol{\alpha}^n(t)
=\frac{2}{\tau}\,\boldsymbol{\alpha}^{n-1}(t)
-\frac{1}{2\tau}\,\boldsymbol{\alpha}^{n-2}(t)
\]

   \begin{eqnarray}
(\frac{3\mathbf{M}}{2\tau}\
+
\mathbf{A}\
-
\mathbf{B})\,\boldsymbol{\alpha}^n(t)
=\frac{2}{\tau}\,\boldsymbol{\alpha}^{n-1}(t)
-\frac{1}{2\tau}\,\boldsymbol{\alpha}^{n-2}(t)\label{fully2}
   \end{eqnarray}
\paragraph{Existence and Uniqueness of the Fully Discrete Scheme.}
The mass matrix $\mathbf{M}$ is symmetric and positive definite, therefore $\mathbf{M}$ is invertible. As we know that $\mathbf{A}$  and $\mathbf{B}$ are well defined. Hence, the operators are bounded. Also, for sufficiently small time step $\tau > 0$,  as the matrix $\mathbf{M}$ is positive definite, the given bilinear form is continuous and coercive. Hence, the matrix which needs to be invertible for BDF1 (n=1) is given by
\[
 \frac{\mathbf{M}}{\tau} + \mathbf{A} - \mathbf{B},
\]
and for the BDF2 scheme ($n \ge 2$), is
\[
\frac{3\mathbf{M}}{2\tau} + \mathbf{A} - \mathbf{B} .
\]

In both cases, as the positive-definite mass matrix dominates, this ensures that the system matrices are nonsingular. Hence, this gives the existence of a fully-discrete approximation for all $t \in [0, T]$. From the system's stability properties, the uniqueness follows.
\begin{lemma}[Stability of the fully discrete BDF2 scheme]
Let $\{U_h^n\}_{n\ge0}\subset W_h$ be the solution of the fully discrete
scheme \eqref{FD-BDF2}. Assume that $\Delta t < \frac{1}{4b_0}.$ Then the discrete solution satisfies
\[
\|U_h^{1}\|
\le (1-2b_0\Delta t)^{-1/2}\|U_h^{0}\|,
\quad
|||U_h^n|||
\le
\exp\!\left(\frac{2b_0 t_n}{1-4b_0\Delta t}\right)
|||U_h^{1}|||,
\quad t_n=n\Delta t,
\]
where $
|||U_h^n|||^2 := \|U_h^n\|^2 + \|2U_h^n-U_h^{\,n-1}\|^2.
$
Consequently, for any $0\le t_n\le T$,
\[
\|U_h^n\|\le C_T\|U_h^0\|,
\]
where $C_T>0$ is independent of $h$ and $\Delta t$. Hence, the fully discrete  BDF2 scheme is stable on any finite time interval.
\end{lemma}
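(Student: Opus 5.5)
We will use the standard $G$-stability energy method for BDF2, in two stages: a backward Euler estimate for the first step, then a telescoping energy identity for $n\ge 2$. Throughout we use only the two bounds already used in the semi-discrete stability lemma: ${\cal A}(v,v)\ge 0$ because $\Gamma\ge 0$, and ${\cal B}(v,v)\le b_0\|v\|^2$. The second follows from the $L^\infty$ bound on $\beta\Gamma$ on the bounded domain $\mathcal{D}$, with the domain measure absorbed into $b_0$ as the paper does.

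\textbf{Step 1 (first step).} Take $\phi_h=U_h^1$ in the $n=1$ equation of \eqref{FD-BDF2} and use the identity
\[
2(a-b,a)=\|a\|^2-\|b\|^2+\|a-b\|^2 .
\]
This gives
\[
\tfrac{1}{2\Delta t}\big(\|U_h^1\|^2-\|U_h^0\|^2+\|U_h^1-U_h^0\|^2\big)\le b_0\|U_h^1\|^2 .
\]
Dropping the nonnegative difference term yields $(1-2b_0\Delta t)\|U_h^1\|^2\le\|U_h^0\|^2$. This is the first claim, and it is meaningful because $\Delta t<1/(4b_0)$.

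\textbf{Step 2 (BDF2 energy identity).} For $n\ge2$, take $\phi_h=U_h^n$ and use the polarization identity
\[
2\big(3a-4b+c,\,a\big)=\big(\|a\|^2+\|2a-b\|^2\big)-\big(\|b\|^2+\|2b-c\|^2\big)+\|a-2b+c\|^2,
\]
with $a=U_h^n$, $b=U_h^{n-1}$, $c=U_h^{n-2}$. This identity should be verified by expanding both sides. Since $(D_t^{(2)}U_h^n,U_h^n)=\frac{1}{4\Delta t}\,2(3a-4b+c,a)$, dropping ${\cal A}\ge0$ and the nonnegative remainder gives
\[
|||U_h^n|||^2-|||U_h^{n-1}|||^2\le 4b_0\Delta t\,\|U_h^n\|^2\le 4b_0\Delta t\,|||U_h^n|||^2 .
\]
Hence $|||U_h^n|||^2\le(1-4b_0\Delta t)^{-1}|||U_h^{n-1}|||^2$, which is why the step restriction $\Delta t<1/(4b_0)$ appears.

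\textbf{Step 3 (iteration).} Apply the elementary bound $(1-x)^{-1}\le \exp\!\big(x/(1-x)\big)$ with $x=4b_0\Delta t$ and iterate from $n$ down to $1$. This gives $|||U_h^n|||^2\le \exp\!\big(4b_0 t_n/(1-4b_0\Delta t)\big)|||U_h^1|||^2$, and taking square roots yields the stated exponent $2b_0t_n/(1-4b_0\Delta t)$. The index shift $t_n$ versus $t_{n-1}$ only helps.

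\textbf{Step 4 (final bound).} Use $\|U_h^n\|\le|||U_h^n|||$ together with
\[
|||U_h^1|||^2=\|U_h^1\|^2+\|2U_h^1-U_h^0\|^2\le 9\|U_h^1\|^2+2\|U_h^0\|^2 .
\]
Combining with Step 1 and the bound $(1-2b_0\Delta t)^{-1}\le 2$, valid for $\Delta t<1/(4b_0)$, gives $\|U_h^n\|\le C_T\|U_h^0\|$ with $C_T$ depending only on $b_0$ and $T$.

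The main obstacle is Step 2. The naive multiplier $U_h^n$ does not telescope in $\|\cdot\|^2$ alone, so one must use the correct $G$-norm $|||\cdot|||$ and check the algebraic identity carefully, including its factor $1/(4\Delta t)$, which fixes the constant $4b_0$ in the step restriction. The paper's definition of $|||\cdot|||$ matches this identity. The identity needs $n\ge2$; the $n=1$ backward Euler step (whose ${\cal A}$-term we read as ${\cal A}(U_h^1,\phi_h)$) is handled separately in Step 1 and enters only through $|||U_h^1|||$.
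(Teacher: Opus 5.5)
Your proposal is correct and follows the paper's proof essentially step for step. Both use the backward-Euler energy estimate at $n=1$, the $G$-norm identity with prefactor $\frac{1}{4\Delta t}$ for $n\ge 2$, iteration via $(1-x)^{-1}\le e^{x/(1-x)}$, and a final bound of $|||U_h^1|||$ by $\|U_h^0\|$; your $|||U_h^1|||^2\le 9\|U_h^1\|^2+2\|U_h^0\|^2$ is a tidier version of the paper's triangle-inequality bound.

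Step 4 needs tightening, a point the paper also only covers by saying ``for sufficiently small $\Delta t$''. The factor $\exp\big(2b_0t_n/(1-4b_0\Delta t)\big)$ is not bounded uniformly as $\Delta t\uparrow 1/(4b_0)$. To get $C_T$ depending only on $b_0$ and $T$, impose a margin such as $\Delta t\le 1/(8b_0)$, which gives $C_T=\sqrt{20}\,e^{4b_0T}$.
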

\begin{proof}
For $n\ge2$, testing equation \eqref{FD-BDF2} with $\phi_h=U_h^n$ yields
\begin{eqnarray}
(D_t^{(2)}U_h^n,U_h^n\big) + {\cal A}(U_h^n,U_h^n) = {\cal B}(U_h^n,U_h^n). \label{eq1}
\end{eqnarray}
First, we observe that
\[
\left(
\frac{3U_h^n - 4U_h^{n-1} + U_h^{n-2}}{2\Delta t},
U_h^n
\right)
=
\frac{1}{2\Delta t}
\left(
3\|U_h^n\|^2
-4(U_h^{n-1},U_h^n)
+(U_h^{n-2},U_h^n)
\right).
\]
We can rewrite as 
\[\left(
\frac{3U_h^n - 4U_h^{n-1} + U_h^{n-2}}{2\Delta t},
U_h^n
\right)
=
\frac{1}{4\Delta t}
\left(
6\|U_h^n\|^2
-8(U_h^{n-1},U_h^n)
+2(U_h^{n-2},U_h^n)
\right).
\]
Using the identity
$
(a-2b+c)^2
=
a^2 + 4b^2 + c^2
-4ab -4bc +2ac,
$ with $a=U_h^n$, $b=U_h^{n-1}$, $c=U_h^{n-2}$, 
we obtain
\[
\left(
\frac{3U_h^n - 4U_h^{n-1} + U_h^{n-2}}{2\Delta t},
U_h^n
\right)
=
\frac{1}{4\Delta t}
\Big[
|||U_h^n|||^2
- |||U_h^{n-1}|||^2
+ \|U_h^n -2U_h^{n-1}+U_h^{n-2}\|^2
\Big].
\]
Substituting into \eqref{eq1} yields
\[
\frac{1}{4\Delta t}
\Big[
|||U_h^n|||^2
- |||U_h^{n-1}|||^2
+ \|U_h^n -2U_h^{n-1}+U_h^{n-2}\|^2
\Big]
+ {\cal A}(U_h^n,U_h^n)
=
{\cal B}(U_h^n,U_h^n).
\]

Since
$
\|U_h^n -2U_h^{n-1}+U_h^{n-2}\|^2 \ge 0.
$
We drop this nonnegative term and obtain
\[
\frac{|||U_h^n|||^2 - |||U_h^{n-1}|||^2}{4\Delta t}
+ {\cal A}(U_h^n,U_h^n)
\le
{\cal B}(U_h^n,U_h^n).
\]
Using coercivity and boundedness, and dropping the nonnegative term $a_0\|U_h^n\|^2$ gives
\[
|||U_h^n|||^2 - |||U_h^{n-1}|||^2
\le
4b_0\Delta t\,\|U_h^n\|^2.
\]
Since $\|U_h^n\|^2 \le |||U_h^n|||^2$, we can rewrite the above equation as
\[
(1-4b_0\Delta t)\,|||U_h^n|||^2
\le
|||U_h^{n-1}|||^2.
\]
For $\Delta t < \dfrac{1}{4b_0}$, the above iteration yields
\[
|||U_h^n|||^2
\le
(1-4b_0\Delta t)^{1-n}
|||U_h^1|||^2.
\]
Using simple calculus,
we obtain
\[
|||U_h^n|||
\le
\exp\!\left(
\frac{2b_0 t_n}{1-4b_0\Delta t}
\right)
|||U_h^1|||.
\]
Since $t_n\le T$, for sufficiently small $\Delta t$,
\[
\|U_h^n\|
\le
|||U_h^n|||
\le
\exp(2b_0T)\,|||U_h^1|||.
\]
For n=1,
\[
(\frac{U_h^{1}-U_h^{0}}{\Delta t},U_h^{1})
+ A(U_h^{1},U_h^{1})
= B(U_h^{1},U_h^{1}).
\]
From the stability estimate at the first time step, we obtain
\[
\frac{1}{2\Delta t}
\left(
\|U_h^{1}\|^2 - \|U_h^{0}\|^2
\right)
\le
b_0 \|U_h^{1}\|^2.
\]
Rearranging the above inequality gives
\[
\|U_h^{1}\|
\le
(1 - 2b_0 \Delta t)^{-1/2}
\|U_h^{0}\|.
\]
As for $n \ge 2$,
\[
\|U_h^{n}\|
\le
\exp(2b_0 T)
|||U_h^{1}|||.
\]
Since $t_n \le T$, and for sufficiently small $\Delta t$, we obtain
\[
\|U_h^{n}\|
\le
\exp(2b_0 T)
(\frac{1}{1-2b_0\Delta t}+(\frac{2}{\sqrt{1-2b_0\Delta t}}+1)^2)^{1/2}
\|U_h^{0}\|.
\]
In particular, using the bound we conclude
\[
\|U_h^n\|\le C_T\|U_h^0\|,
\]

\end{proof}

\vspace{-0.8cm}

\subsection*{Error Analysis of Fully-Discrete Formulation}
We now derive an a priori error estimate for the fully discrete scheme. We decompose the total error into two distinct parts: the spatial projection error and the temporal discretization error.
The error at time $t_n$ is then decomposed as:
\begin{equation} \label{3.7}
U_h^n - u^n = \theta^n + \eta^n,
\end{equation}
where
$\theta^n := U_h^n - {\cal P}_h u^n, \;\; 
\eta^n := {\cal P}_h u^n - u^n,$

\begin{thm}[\textbf{Fully Discrete Convergence for BDF2 Scheme}]\label{thm:bdf2_convergence}
Let $u$ be the solution of the linear parabolic problem \eqref{eq:model}, and let $U_h^n \in W_h$ denote the fully discrete finite element solution obtained using the BDF2 scheme \eqref{FD-BDF2}. Assume
\[
u \in H^1(0,T;H^{s+1}( \mathcal{D})) \cap H^3(0,T;L^2( \mathcal{D})), \quad s \in \{1,2,\ldots,r\}.
\]
Then, for sufficiently small $\Delta t$, the fully discrete error 
$ \mathbf{E}_h^n := U_h^n - u^n$
satisfies
\begin{equation}\label{eq:bdf2-conv}
\max_{1 \le n \le N} \| \mathbf{E}_h^n \|^2
\le C\left(
h^{2(s+1)}\Big(\|u_0\|_{H^{s+1}(\mathcal{D} )}^2
+\int_0^T \|u_t(s)\|_{H^{s+1}(\mathcal{D} )}^2\,ds\Big)
+\Delta t^4 \int_0^T \| u_{ttt}(s) \|^2\,ds
\right).
\end{equation}
where $C$ depends only on $T$ and the solution regularity, but is independent of $h$ and $\Delta t$.
\end{thm}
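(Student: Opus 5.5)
We split the error as in \eqref{3.7}, $\mathbf{E}_h^n=\theta^n+\eta^n$. The projection part is handled by Lemma~\ref{lem:ritz}. Writing $\eta^n=\eta(0)+\int_0^{t_n}\eta_t\,ds$ and using that $\mathcal{P}_h$ commutes with $\partial_t$, Lemma~\ref{lem:ritz} and Cauchy--Schwarz give
\[
\|\eta^n\|^2\le C h^{2(s+1)}\Big(\|u_0\|_{H^{s+1}}^2+T\int_0^T\|u_t\|_{H^{s+1}}^2\,ds\Big).
\]
So the work is to bound $\theta^n$ uniformly in $n$.

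\textbf{Error equation.} Evaluate the weak form \eqref{eq:weak_form} at $t_n$, test with $\phi_h\in W_h$, and subtract it from \eqref{FD-BDF2}. Since $(\eta^n,\phi_h)=0$ by the definition of $\mathcal{P}_h$, the discrete time derivatives of $\eta$ drop out of the mass term. For $n\ge2$ this gives
\[
(D_t^{(2)}\theta^n,\phi_h)+\mathcal{A}(\theta^n,\phi_h)=\mathcal{B}(\theta^n,\phi_h)+\mathcal{B}(\eta^n,\phi_h)-\mathcal{A}(\eta^n,\phi_h)+(\rho^n,\phi_h),
\]
with truncation error $\rho^n:=u_t(t_n)-D_t^{(2)}u^n$; the analogous identity holds for $n=1$ with $\delta_t$ and $\rho^1:=u_t(t_1)-\delta_t u^1$. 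A Taylor expansion with integral remainder gives $\|\rho^n\|^2\le C\Delta t^3\int_{t_{n-2}}^{t_n}\|u_{ttt}\|^2ds$ for $n\ge2$. For the first step, $\|\rho^1\|^2\le C\Delta t\int_0^{t_1}\|u_{tt}\|^2ds$, and $\|u_{tt}\|$ is controlled through $u\in H^3(0,T;L^2)$.

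\textbf{Energy argument.} Take $\phi_h=\theta^n$ and use the identity from the stability lemma for $(D_t^{(2)}\theta^n,\theta^n)$ in terms of $|||\theta^n|||^2-|||\theta^{n-1}|||^2$, discarding the nonnegative second-difference term. Bound the right-hand side with $|\mathcal{B}(v,w)|\le b_0|\mathcal{D}|\,\|v\|\|w\|$, $|\mathcal{A}(v,w)|\le a_0\|v\|\|w\|$ and Young's inequality. This yields
\[
|||\theta^n|||^2-|||\theta^{n-1}|||^2\le C\Delta t\,|||\theta^n|||^2+C\Delta t\big(\|\eta^n\|^2+\|\rho^n\|^2\big).
\]
Summing from $n=2$ to $m$ and applying a discrete Gronwall lemma (for $\Delta t$ small, $C\Delta t<1/2$) gives
\[
|||\theta^m|||^2\le C\Big(|||\theta^1|||^2+\Delta t\sum_n\|\eta^n\|^2+\Delta t^4\int_0^T\|u_{ttt}\|^2\Big).
\]
Here $\Delta t\sum_n\|\eta^n\|^2$ is bounded by $T\max_n\|\eta^n\|^2$, which is of the $h^{2(s+1)}$ form above.

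\textbf{Starting step.} The main obstacle is the backward Euler step. Since $\theta^0=0$, testing its error equation with $\theta^1$ gives
\[
\|\theta^1\|^2\le C\Delta t^2\big(\|\eta^1\|^2+\|\rho^1\|^2\big)\le Ch^{2(s+1)}(\cdots)+C\Delta t^3\int_0^{t_1}\|u_{tt}\|^2.
\]
The exponent $\Delta t^3$ is only enough for first order locally. To reach $\Delta t^4$, we would keep the factor $\Delta t^2$ from $\theta^0=0$ and bound $\rho^1$ through $u_{tt}$: the $L^\infty$-in-time bound $\sup\|u_{tt}\|\le C\|u\|_{H^3(0,T;L^2)}$ gives $\|\theta^1\|^2\le C\Delta t^4\|u\|^2_{H^3(0,T;L^2)}$. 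We would accept that this term enters the constant $C$ through the stated regularity. Finally, $|||\theta^1|||^2\le 5\|\theta^1\|^2$ because $\theta^0=0$. Combining this with $\|\theta^n\|\le|||\theta^n|||$, the bound on $\eta^n$, and the triangle inequality gives \eqref{eq:bdf2-conv}.
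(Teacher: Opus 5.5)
Your argument has the same structure as the paper's: the splitting $\theta^n+\eta^n$, the $G$-norm identity for $D_t^{(2)}$, Young's inequality and a discrete Gronwall lemma. It differs at two points, both to your advantage.

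\begin{itemize}
\item You observe that $(D_t^{(2)}\eta^n,\phi_h)=0$ because $\mathcal{P}_h$ is the $L^2$ projection. The paper's term $\tau_1^n=D_t^{(2)}\eta(t_n)$ and its estimate \eqref{tau2} are therefore superfluous.
\item You treat the backward Euler step on its own. Testing with $\theta^1$ and using $\theta^0=0$ gives $\|\theta^1\|\le C\Delta t\,(\|\eta^1\|+\|\rho^1\|)$. The BDF2 telescoping then starts at $n=2$ with $|||\theta^1|||^2=5\|\theta^1\|^2$.
\end{itemize}

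The paper instead sums the BDF2 identity from $n=1$. There the scheme is not BDF2, $|||\theta^0|||$ would involve a nonexistent $\theta^{-1}$, and \eqref{tau1} never covers $\tau^1$. Had $\Delta t\|\rho^1\|^2$ simply been fed into the energy sum, one would only get $\|\theta^n\|=O(\Delta t^{3/2})$. So your separate treatment of the first step is what actually delivers second order in time.

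Your route also shows that the start contributes $C\Delta t^4\sup_{[0,t_1]}\|u_{tt}\|^2$, which is not of the form $\Delta t^4\int_0^T\|u_{ttt}\|^2$. This is not a defect of your proof. BDF2 carries a starting defect of size about $\tfrac{\Delta t^2}{2}\|u_{tt}(0)\|$ forward without damping, since its characteristic polynomial $3z^2-4z+1$ has the root $1$. So with a backward Euler start, a $u_{tt}$-dependent term cannot be avoided in general. For instance, with $\Gamma\equiv\varepsilon$ and $\beta$ fixed, the time error is of order $\varepsilon^2\Delta t^2$, while $\Delta t^2\bigl(\int_0^T\|u_{ttt}\|^2\bigr)^{1/2}$ is of order $\varepsilon^3\Delta t^2$.

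Hence \eqref{eq:bdf2-conv} is recovered only by letting $C$ depend on $u$ itself, which is how you read the clause on ``solution regularity''. The honest form of the estimate adds $\Delta t^4\|u\|_{H^3(0,T;L^2)}^2$ to the right-hand side.

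A minor point: your remark that $\Delta t^3\int_0^{t_1}\|u_{tt}\|^2$ is ``only first order'' is off. That quantity is already bounded by $\Delta t^4\sup_{[0,t_1]}\|u_{tt}\|^2$.
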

\begin{proof}
Let's recall the error splitting
$
\theta^n := U_h^n - {\cal P}_h u^n, \quad \eta^n := {\cal P}_h u^n - u^n.
$

Using Lemma \ref{lem:ritz}, we get
\begin{eqnarray} \label{tau3}
\|\eta^n\| \le Ch^{r+1}\|u(t_n)\|_{H^{r+1}(\mathcal{D} )}
\le Ch^{r+1} \left(\|u_0\|_{H^{r+1}(\mathcal{D} )}+\int_0^{t_n}\|u_t(s)\|_{H^{r+1}(\mathcal{D} )}ds\right).
\end{eqnarray}
Then, the fully discrete error equation reads in $\theta$ as
\begin{align}\label{eq:error_eq}
\left( \frac{3\theta^n - 4\theta^{n-1} + \theta^{n-2}}{2 \Delta t}, \phi_h \right) + {\cal A}(\theta^n, \phi_h)
= {\cal B}(\theta^n, \phi_h)+{\cal B}(\eta^n, \phi_h)\nonumber\\- {\cal A}(\eta^n, \phi_h) - (\tau^n, \phi_h),\quad \forall\;\phi_h \in W_h,\;n\geq 2.
\end{align}
The term $\tau^n$ denotes the temporal discretization error associated with 
the BDF2 approximation. It can be decomposed as
\[
\tau^n = D_t^{(2)}\eta(t_n) + \big(D_t^{(2)} u(t_n)-u_t(t_n)\big)
:= \tau_1^n + \tau_2^n,
\]
where
\[
\tau_1^n = D_t^{(2)}\eta(t_n), \qquad 
\tau_2^n = D_t^{(2)} u(t_n)-u_t(t_n).
\]
Using Taylor's theorem with integral remainder, the consistency error 
$\tau_2^n$ admits the representation
\[
\tau_2^n
=
-\frac{1}{2\Delta t}
\int_{t_{n-2}}^{t_n} (t_n-s)^2 u_{ttt}(s)\,ds .
\]
Consequently,
\[
|\tau_2^n|
\le
C\Delta t
\int_{t_{n-2}}^{t_n} |u_{ttt}(s)|\,ds .
\]
Multiplying by $\Delta t$ and summing over $n=2,\dots,N$, we obtain
\begin{eqnarray} \label{tau1}
\Delta t\sum_{n=2}^{N} \|\tau_2^n\|^2
\le
C\Delta t^4
\int_{0}^{t_N} \|u_{ttt}(s)\|^2\,ds .
\end{eqnarray}
For the term $\tau_1^n$, we have
\[
\tau_1^n=\frac{3\eta(t_n)-4\eta(t_{n-1})+\eta(t_{n-2})}{2\Delta t}.
\]
Using the identity
$
3\eta_n-4\eta_{n-1}+\eta_{n-2}
=
3(\eta_n-\eta_{n-1})-(\eta_{n-1}-\eta_{n-2}),
$ we obtain
\[
\Delta t\,\tau_1^n
=
\frac{3}{2}\int_{t_{n-1}}^{t_n}\eta_t(s)\,ds
-
\frac{1}{2}\int_{t_{n-2}}^{t_{n-1}}\eta_t(s)\,ds .
\]
Taking norms and using the triangle inequality yields
\[
\|\tau_1^n\|
\le
C \sup_{t\in(t_{n-2},t_n)} \|\eta_t(t)\|.
\]
Consequently,
\begin{eqnarray} \label{tau2}
\Delta t\sum_{n=2}^{N}\|\tau_1^n\|^2
\le
C h^{2(r+1)}\sum_{n=2}^{N}\int_{t_{n-2}}^{t_n}\|u_t(s)\|_{H^{r+1}}^2\,ds
\le
C h^{2(r+1)}\int_{0}^{t_N}\|u_t(s)\|_{H^{r+1}}^2\,ds .
\end{eqnarray}
Choosing $\phi_h=\theta^n$ in the error equation \eqref{eq:error_eq}, we obtain
\begin{align}
\left( \frac{3\theta^n-4\theta^{n-1}+\theta^{n-2}}{2\Delta t},\theta^n \right)
+{\cal A}(\theta^n,\theta^n)
&= {\cal B}(\theta^n,\theta^n)+{\cal B}(\eta^n,\theta^n) \nonumber\\
&\quad-{\cal A}(\eta^n,\theta^n)
+(\tau^n,\theta^n).
\end{align}
Next, we use the discrete BDF2 identity
\[
\left( \frac{3\theta^n - 4\theta^{n-1} + \theta^{n-2}}{2 \Delta t}, \theta^n \right)
= \frac{1}{4 \Delta t} 
\Big[ ||| \theta^n |||^2 - ||| \theta^{n-1} |||^2
+ \|\theta^n - 2\theta^{n-1} + \theta^{n-2}\|^2 \Big],
\]
where the discrete norm is defined as
$
|||\theta^n |||^2 := \|\theta^n\|^2 + \|2\theta^n - \theta^{n-1}\|^2. $

Substituting this identity into the above relation yields
\begin{align}
\frac{1}{4 \Delta t}
\Big[ ||| \theta^n |||^2 - ||| \theta^{n-1} |||^2
+ \|\theta^n - 2\theta^{n-1} + \theta^{n-2}\|^2 \Big]
+ {\cal A}(\theta^n,\theta^n)
= {\cal B}(\theta^n,\theta^n) \nonumber\\+ {\cal B}(\eta^n,\theta^n) 
 - {\cal A}(\eta^n,\theta^n)
+ (\tau^n,\theta^n).
\end{align}
Since the term $\|\theta^n - 2\theta^{n-1} + \theta^{n-2}\|^2$ is non–negative, it can be dropped to obtain the estimate
\begin{eqnarray}\label{eq:energy_ineq}
\left[ ||| \theta^n |||^2 - ||| \theta^{n-1} |||^2 \right] + {\cal A}(\theta^n, \theta^n)
\le 4\Delta t {\cal B}(\eta^n, \theta^n)-4 \Delta t{\cal A}(\eta^n, \theta^n)\nonumber\\ + 4 \Delta t(\tau^n, \theta^n)+4 \Delta t{\cal B}(\theta^n, \theta^n).
\end{eqnarray}
Summing \eqref{eq:energy_ineq} over $n=1,\dots,N$ and noting that the first
term forms a telescoping sum, we obtain
\begin{eqnarray} \label{lhs}
||| \theta^N |||^2 - ||| \theta^{0} |||^2 
+ \sum_{n=1}^{N} {\cal A}(\theta^n, \theta^n)
\le 4\Delta t \sum_{n=1}^{N} {\cal B}(\eta^n, \theta^n)
-4\Delta t \sum_{n=1}^{N} {\cal A}(\eta^n, \theta^n) \nonumber\\
+4\Delta t \sum_{n=1}^{N} (\tau^n, \theta^n)
+4\Delta t \sum_{n=1}^{N} {\cal B}(\theta^n, \theta^n).
\end{eqnarray}
The terms on the right-hand side of \eqref{eq:energy_ineq} can be estimated using the Cauchy--Schwarz inequality together with Young's inequality, with a suitable positive constant $\nu$, we obtain
\begin{eqnarray} \label{rhs}
4\Delta t \sum_{n=1}^{N}\Big[{\cal B}(\eta^n, \theta^n)-{\cal A}(\eta^n, \theta^n)
+ (\tau^n, \theta^n)
+{\cal B}(\theta^n, \theta^n)\Big]
\le 4\Delta t \sum_{n=1}^{N}\Big[C(\nu)\|\theta^n\|^2 \nonumber\\+C_{\nu}\|\tau^n\|^2+C^\nu\|\eta^n\|^2 \Big]
\le C \Delta t \sum_{n=1}^{N}\left(\|\theta^n\|^2+\|\tau^n_1\|^2+\|\tau^n_2\|^2+\|\eta^n\|^2\right).
\end{eqnarray}
Using the estimates \eqref{tau1} and \eqref{tau2} in the above inequality, we obtain
\begin{eqnarray}\label{rhs1}
4\Delta t \sum_{n=1}^{N}\Big[{\cal B}(\eta^n,\theta^n)-{\cal A}(\eta^n,\theta^n)
+(\tau^n,\theta^n)+{\cal B}(\theta^n,\theta^n)\Big]
\le C\Delta t \sum_{n=1}^{N}\|\theta^n\|^2 \nonumber\\
+ C h^{2(r+1)}\Big(\int_{0}^{t_N}\|u_t(s)\|_{H^{r+1}}^2\,ds 
+\Delta t\sum_{n=1}^{N}(\|u_0\|_{H^{r+1}(\mathcal D)} + \int_0^{t_n}\|u_t(s)\|_{H^{r+1}(\mathcal D)}\,ds)^2\Big)\nonumber\\
+ C\Delta t^4\int_{0}^{t_N}\|u_{ttt}(s)\|^2\,ds .
\end{eqnarray}
Inserting the estimate \eqref{rhs1} into \eqref{lhs}, we obtain
\begin{align}
||| \theta^N |||^2 - ||| \theta^{0} |||^2 
+ \sum_{n=1}^{N} {\cal A}(\theta^n, \theta^n)
&\le C\Delta t \sum_{n=1}^{N}\|\theta^n\|^2
+ C h^{2(r+1)}\Big(\int_{0}^{t_N}\|u_t(s)\|_{H^{r+1}}^2\,ds \nonumber\\
&\qquad +\Delta t\sum_{n=1}^{N}(\|u_0\|_{H^{r+1}(\mathcal D)} + \int_0^{t_n}\|u_t(s)\|_{H^{r+1}(\mathcal D)}\,ds)^2\Big)\nonumber\\
&\qquad + C\Delta t^4\int_{0}^{t_N}\|u_{ttt}(s)\|^2\,ds .
\end{align}
Since $\theta^0=0$ and $\|\theta^n\|^2 \le ||| \theta^n |||^2$, the above inequality simplifies to
\begin{eqnarray}
||| \theta^N |||^2 
+ \sum_{n=1}^{N} {\cal A}(\theta^n, \theta^n)
\le C \Delta t \sum_{n=1}^{N}|||\theta^n|||^2+
 C h^{2(r+1)}\Big(\int_{0}^{t_N}\|u_t(s)\|_{H^{r+1}}^2\,ds \nonumber\\
 +\Delta t\sum_{n=1}^{N}(\|u_0\|_{H^{r+1}(\mathcal D)} + \int_0^{t_n}\|u_t(s)\|_{H^{r+1}(\mathcal D)}\,ds)^2\Big)
+ C\Delta t^4\int_{0}^{t_N}\|u_{ttt}(s)\|^2\,ds .
\end{eqnarray}
Finally, applying the discrete Grönwall lemma, we conclude
\begin{eqnarray} \label{t2}
||| \theta^n |||^2 \le C h^{{2(r+1)}}(\int_{0}^{t_N}\|u_t(s)\|_{H^{r+1}}^2\,ds+\|u_0\|_{H^{r+1}}^2)+C\Delta t^4
\int_{0}^{t_N} \|u_{ttt}(s)\|^2\,ds.
\end{eqnarray}
By combining the estimates \eqref{tau3} and \eqref{t2}, we arrive at the desired fully discrete estimate
\begin{eqnarray}
\|U_h^n - u^n\|^2 
&\le& C\left(\|\theta^n\|^2+\|\eta^n\|^2\right) \nonumber\\
&\le& C h^{2(r+1)}\left(\|u_0\|_{H^{r+1}}^2
+\int_{0}^{t_N}\|u_t(s)\|_{H^{r+1}}^2\,ds\,\right) \nonumber\\
&&+C\Delta t^4\int_{0}^{t_N}\|u_{ttt}(s)\|^2\,ds .
\end{eqnarray}
which completes the proof.
\end{proof}

\section{Numerical Results}\label{sec:5}

This section checks the numerical performance of the proposed finite element scheme in a computational domain which is taken to be bounded $\mathcal{D} \subset \mathbb{R}^d$, $d \geq 1$. The main objective is to check the convergence properties and accuracy of the spatial discretization. The domain $\mathcal{D}$ is discretized by both meshes, i.e., uniform and non-uniform graded meshes, to check the influence caused by mesh refinement on approximation quality. Moreover, we also compute the moments associated with the considered integro–partial differential equation. 
Let $u$ denote the exact analytical solution and $u_h$ the corresponding finite element approximation. The discretization error is defined as $\mathcal{E}:= u - u_h.$ The error is measured in the standard norms $L^2(\mathcal{D} )$ and $H^1(\mathcal{D} )$. The $L^2$-error is defined as
\begin{equation}
\|\mathcal{E}\|_{L^2(\mathcal{D} )}
=
\left(
\int_{\mathcal{D} } |\mathcal{E}|^2 \, d\mathbf{x}
\right)^{1/2},
\quad
\text{where } \mathbf{x}=(x_1,x_2,\dots,x_d)\in\mathbb{R}^d.
\end{equation}
The $H^1$-error is given by
$
\|\mathcal{E}\|_{H^1(\mathcal{D} )}
=
\left(
\|\mathcal{E}\|_{L^2(\mathcal{D} )}^2
+
\sum_{i=1}^{d}
\left\|
\frac{\partial \mathcal{E}}{\partial x_i}
\right\|_{L^2(\mathcal{D} )}^2
\right)^{1/2}.
$ 

To obtain a normalized measure of the approximation quality, we also compute the relative $L^2$-error defined by
\begin{equation}
\mathrm{RelError}_{L^2}
=\frac{\|u-u_h\|_{L^2(\mathcal{D} )}}
     {\|u\|_{L^2(\mathcal{D} )}}.
\end{equation}
To quantify the asymptotic behavior of the scheme, the Experimental Order of Convergence (EOC) is computed at the final simulation time $T$. When the exact solution $u$ is available, the EOC is evaluated by comparing the errors obtained on two successive mesh refinements characterized by mesh sizes $h_N$ and $h_{2N}$:
\begin{equation}
\mathrm{EOC}
=
\frac{\ln \left( \mathcal{E}_N / \mathcal{E}_{2N} \right)}
     {\ln \left( h_N / h_{2N} \right)}.
\end{equation}
\subsection{Moments and their relative error}
Let $\mathcal{M}(t)$ denote an exact analytical moment of the integro–partial differential equation and $\mathcal{M}_h(t)$ its numerical approximation computed from $u_h$. The relative error of the moment is defined as
\begin{equation}
\mathrm{RelError}_{\mathcal{M}}(t)
=
\frac{|\mathcal{M}(t)-\mathcal{M}_h(t)|}
     {|\mathcal{M}(t)|}.
\end{equation}
These quantities allow us to assess both local accuracy (via norm errors) and global structural accuracy (via moment preservation). For the validation of the numerical scheme, several representative test cases have been considered for the computation of the analytical and numerical moments of the integro–partial differential equation. The comparison between exact and computed moments allows a detailed assessment of the accuracy and consistency of the proposed finite element approximation. \\
The selected test cases, along with their exact moment formulas, are summarized in Table~\ref{tab:test_cases}. All the problems are considered with mono-disperse initial data \(u(x_1, x_2, 0) = \delta(x_1 - 1)\delta(x_2 - 1)\). The computational domain is chosen as \(\mathcal{D} := [10^{-9}, 2]\). For the 2D cases , the domain is discretized using a $20 \times 20$ nonuniform grid. To investigate the effect of discretization errors, additional simulations are performed on finer grids consisting of $40 \times 40$ and $60 \times 60$ nonuniform cells.

\begin{table}[htbp]
\centering
\caption{Summary of the selected test problems in two and three dimensions.}
\label{tab:test_cases}
\renewcommand{\arraystretch}{1.4}
\setlength{\tabcolsep}{6pt}

\begin{tabular}{@{}c c c p{7.2cm}@{}}
\toprule
\textbf{Test} 
& $\boldsymbol{\Gamma(x_1,x_2,x_3)}$ 
& $\boldsymbol{\beta(x \mid y)}$ 
& \textbf{Exact moments} \\
\midrule

\multicolumn{4}{c}{\textbf{Two–Dimensional Test Cases}} \\
\midrule

1 
& $1$ 
& $\dfrac{2}{y_1 y_2}$ 
& $\displaystyle 
\mathcal{M}_{k,l}(t)=
\exp\!\left[\left(\frac{2}{(k+1)(l+1)}-1\right)t\right]$ \\[8pt]

2 
& $x_1+x_2$ 
& $\dfrac{2}{y_1 y_2}$ 
& $\displaystyle 
\mathcal{M}_{1,0}(t)=\mathcal{M}_{0,1}(t)=1, 
\qquad
\mathcal{M}_{0,0}(t)=1+2t$ \\[8pt]

3 
& $1$ 
& $\displaystyle 
2\delta\!\left(x_1-\frac{y_1}{2}\right)
\delta\!\left(x_2-\frac{y_2}{2}\right)$ 
& $\displaystyle 
\mathcal{M}_{k,l}(t)=
\exp\!\left[(2^{\,1-k-l}-1)t\right]$ \\[8pt]

4 
& $x_1+x_2$ 
& $\displaystyle 
2\delta\!\left(x_1-\frac{y_1}{2}\right)
\delta\!\left(x_2-\frac{y_2}{2}\right)$ 
& $\displaystyle 
\mathcal{M}_{1,0}(t)=\mathcal{M}_{0,1}(t)=1,
\qquad
\mathcal{M}_{0,0}(t)=1+2t$ \\

\midrule
\multicolumn{4}{c}{\textbf{Three–Dimensional Test Case}} \\
\midrule

5 
& $x_1 x_2 x_3$ 
& $\dfrac{8}{y_1 y_2 y_3}$ 
& $\displaystyle 
\mathcal{M}_{1,1,1}(t)=1,
\qquad
\mathcal{M}_{0,0,0}(t)=1+7t$ \\

\bottomrule
\end{tabular}
\end{table}


\textbf{Test Case 1: }
In this example, fragmentation is binary, showing a uniform distribution of daughter particles with a constant selection function. This implies that all particle sizes break at the same rate. Each fragment forms independently, and the resulting daughter particles are uniformly distributed. The results for Case 1 are illustrated in Figure~\ref{fig:case1}. Mass is conserved throughout the simulation. The accuracy of the zeroth moment improves as the grid is refined. The relative errors of the moments computed using the BDF2 scheme are listed in Table~\ref{tab:m00} - Table~\ref{tab:m10m01} for different grid resolutions. It can be observed that the relative errors in all moments decrease as we refine the grids.

\begin{table}[htbp]
\centering
\caption{Comparison of exact and numerical values of $m_{00}(t)=e^t$.}
\label{tab:m00}
\renewcommand{\arraystretch}{1.35}
\setlength{\tabcolsep}{8pt}

\begin{tabular}{c c cc cc cc}
\toprule
$t$ & Exact 
& \multicolumn{2}{c}{$20\times20$}
& \multicolumn{2}{c}{$40\times40$}
& \multicolumn{2}{c}{$60\times60$} \\

\cmidrule(lr){3-4} \cmidrule(lr){5-6} \cmidrule(lr){7-8}

& 
& Num & Rel. Err.
& Num & Rel. Err.
& Num & Rel. Err. \\

\midrule

0.1 & 1.10517
& 1.11161 & 5.82e-03
& 1.10655 & 1.25e-03
& 1.10565 & 4.32e-04 \\

0.4 & 1.49182
& 1.52020 & 1.90e-02
& 1.49589 & 2.73e-03
& 1.49184 & 7.62e-06 \\

0.7 & 2.01375
& 2.24506 & 1.15e-01
& 2.04005 & 1.31e-02
& 2.01690 & 1.56e-03 \\

1.0 & 2.71828
& 11.64820 & 3.29e+00
& 3.36613 & 2.38e-01
& 2.85339 & 4.97e-02 \\

\bottomrule
\end{tabular}
\end{table}
\begin{table}[htbp]
\centering
\caption{Comparison of exact and numerical values of $m_{10}(t)+m_{01}(t)=2$.}
\label{tab:m10m01}
\renewcommand{\arraystretch}{1.35}
\setlength{\tabcolsep}{8pt}

\begin{tabular}{c c cc cc cc}
\toprule
$t$ & Exact 
& \multicolumn{2}{c}{$20\times20$}
& \multicolumn{2}{c}{$40\times40$}
& \multicolumn{2}{c}{$60\times60$} \\

\cmidrule(lr){3-4} \cmidrule(lr){5-6} \cmidrule(lr){7-8}

& 
& Num & Rel. Err.
& Num & Rel. Err.
& Num & Rel. Err. \\

\midrule

0.1 & 2
& 2.03174 & 1.59e-02
& 2.00774 & 3.87e-03
& 2.00334 & 1.67e-03 \\

0.4 & 2
& 2.04671 & 2.34e-02
& 2.01073 & 5.36e-03
& 2.00435 & 2.18e-03 \\

0.7 & 2
& 2.06744 & 3.37e-02
& 2.01499 & 7.49e-03
& 2.00590 & 2.95e-03 \\

1.0 & 2
& 2.09929 & 4.96e-02
& 2.02122 & 1.06e-02
& 2.00826 & 4.13e-03 \\

\bottomrule
\end{tabular}
\end{table}
\begin{figure}[htbp]
\centering
\begin{subfigure}{0.32\textwidth}
    \centering
    \includegraphics[width=\linewidth]{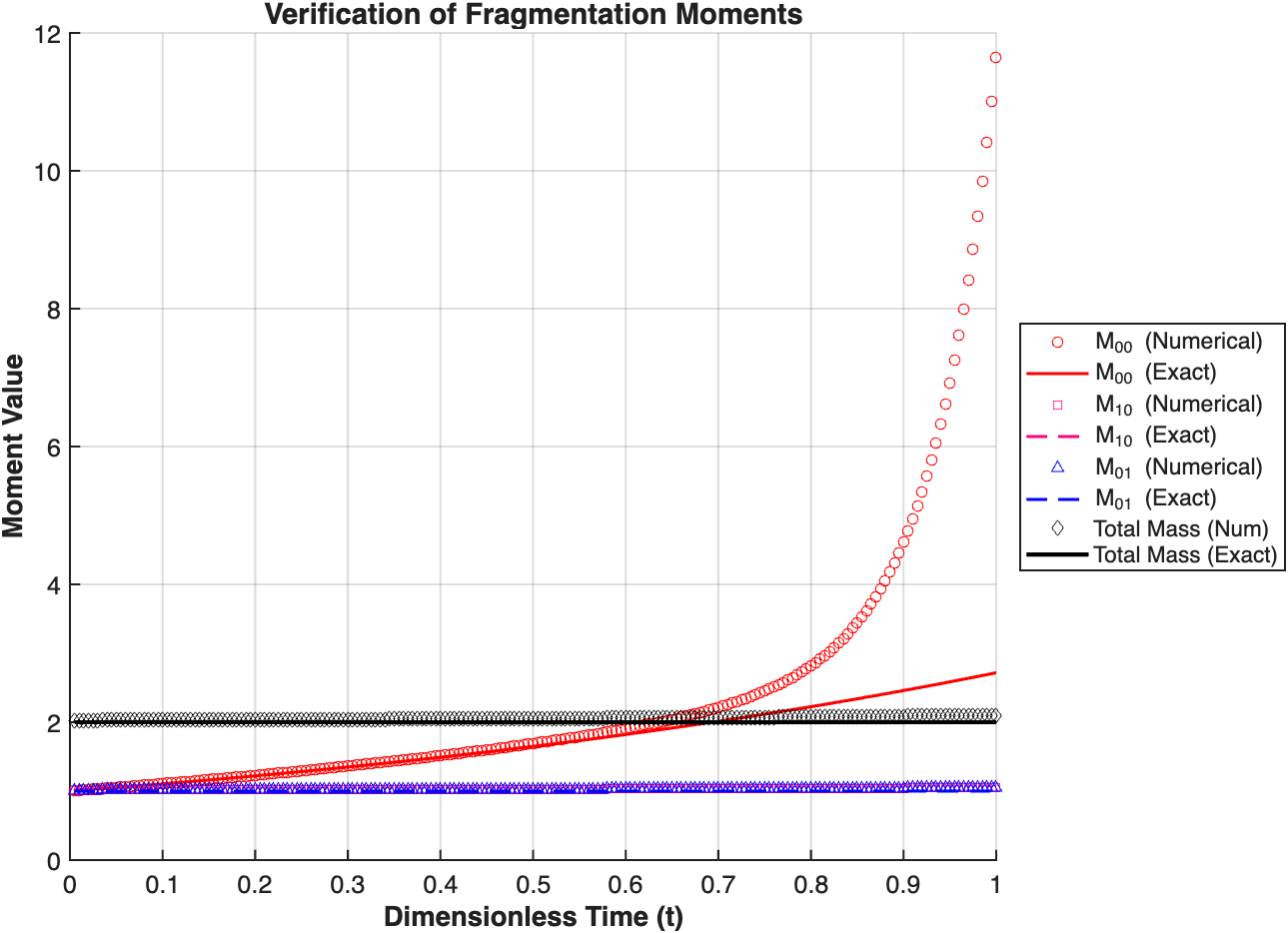}
    \caption{$20\times20$ grid}
\end{subfigure}
\hfill
\begin{subfigure}{0.32\textwidth}
    \centering
    \includegraphics[width=\linewidth]{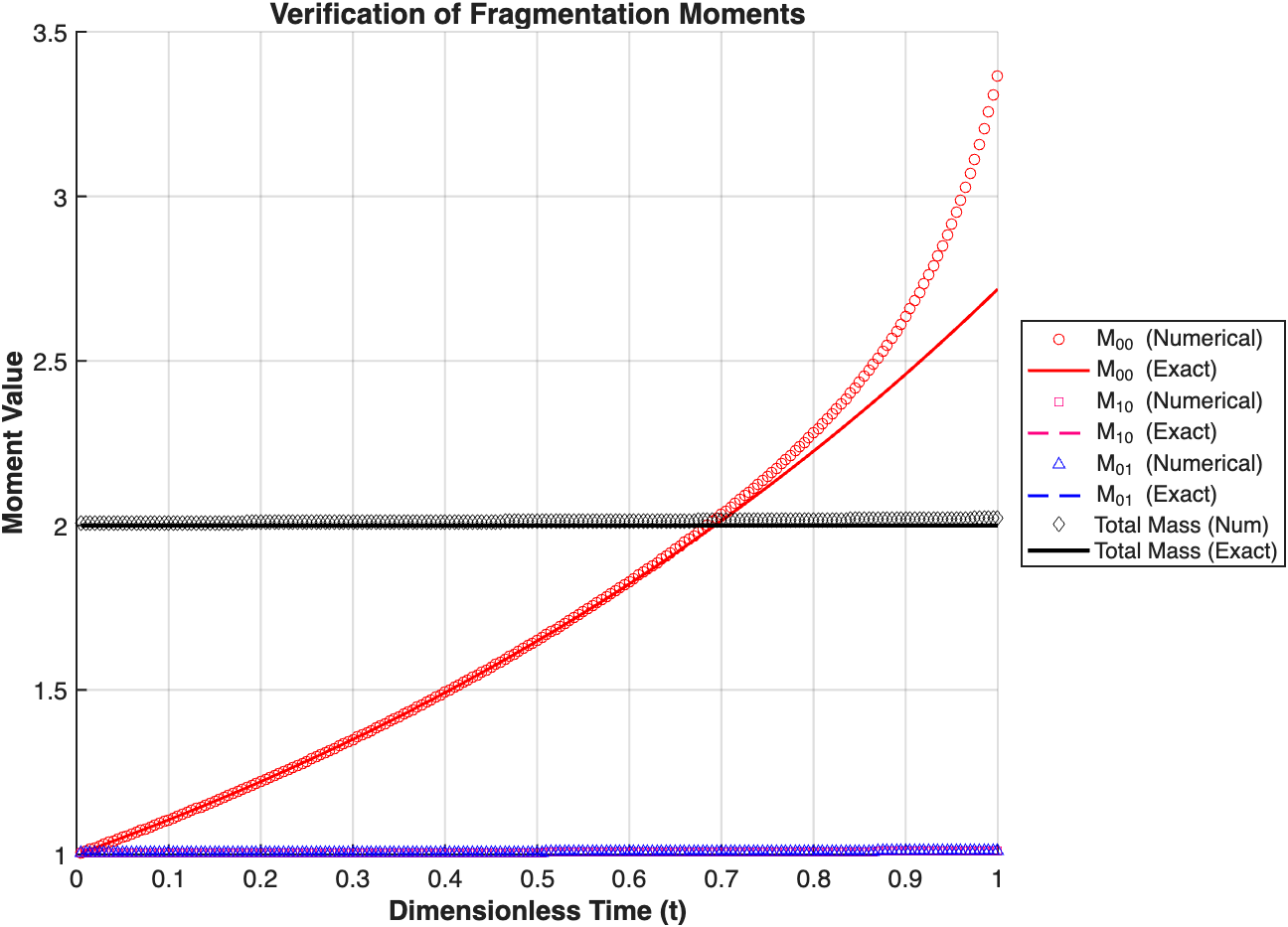}
    \caption{$40\times40$ grid}
\end{subfigure}
\hfill
\begin{subfigure}{0.32\textwidth}
    \centering
    \includegraphics[width=\linewidth]{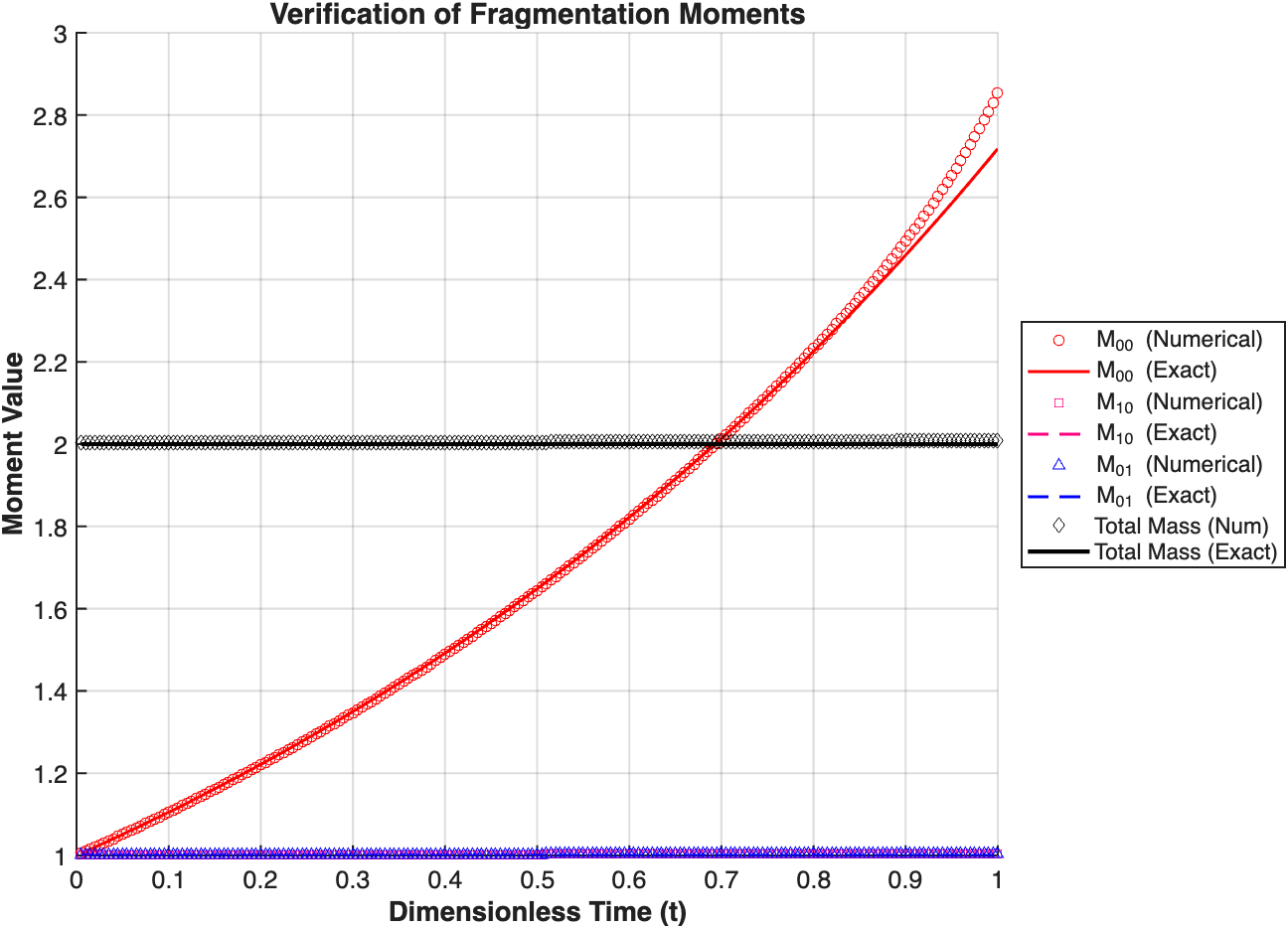}
    \caption{$60\times60$ grid}
\end{subfigure}

\caption{Comparison of Test Case 1 for different grid resolutions.}
\label{fig:case1}
\end{figure}

\textbf{Test Case 2: }
In this example, the fragmentation process is binary with a uniform distribution of daughter particles. The breakage mechanism is governed by a size-dependent selection function given by $\Gamma(x_1,x_2) = x_1 + x_2$, implying that the breakage rate is proportional to the size of the particle. The simulations are performed over the time interval $t \in [0,3]$. The numerical results are summarized in Table~\ref{tab:m2_m00} -Table~\ref{tab:m2_m10m01},  and the corresponding solution profiles for Case 2 are displayed in Figure~\ref{fig:case2}. Mass conservation is preserved throughout the simulation. Moreover, the mixed-order moment is accurately captured by the proposed numerical scheme. The relative errors in all moments decrease considerably as the mesh is refined.

\begin{table}[htbp]
\centering
\caption{Comparison of exact and numerical values of $m_{00}(t)=1+2t$ for Test Case 2.}
\label{tab:m2_m00} 
\renewcommand{\arraystretch}{1.35}
\setlength{\tabcolsep}{8pt}

\begin{tabular}{c c cc cc cc}
\toprule
$t$ & Exact
& \multicolumn{2}{c}{$20\times20$}
& \multicolumn{2}{c}{$40\times40$}
& \multicolumn{2}{c}{$60\times60$} \\

\cmidrule(lr){3-4} \cmidrule(lr){5-6} \cmidrule(lr){7-8}

& 
& Num & Rel. Err.
& Num & Rel. Err.
& Num & Rel. Err. \\
\midrule

1.0 & 3
& 3.16621 & 5.54e-02
& 3.04374 & 1.46e-02
& 3.02325 & 7.75e-03 \\

1.5 & 4
& 4.34525 & 8.63e-02
& 4.09869 & 2.47e-02
& 4.05770 & 1.44e-02 \\

2.0 & 5
& 5.60887 & 1.22e-01
& 5.18265 & 3.65e-02
& 5.11240 & 2.25e-02 \\

2.5 & 6
& 6.97454 & 1.62e-01
& 6.29981 & 4.99e-02
& 6.18984 & 3.16e-02 \\

3.0 & 7
& 8.46356 & 2.09e-01
& 7.45446 & 6.49e-02
& 7.29225 & 4.18e-02 \\

\bottomrule
\end{tabular}
\end{table}

\begin{table}[htbp]
\centering
\caption{Comparison of exact and numerical values of $m_{10}(t)+m_{01}(t)=2$ for Test Case 2.}
\label{tab:m2_m10m01}
\renewcommand{\arraystretch}{1.35}
\setlength{\tabcolsep}{8pt}

\begin{tabular}{c c cc cc cc}
\toprule
$t$ & Exact
& \multicolumn{2}{c}{$20\times20$}
& \multicolumn{2}{c}{$40\times40$}
& \multicolumn{2}{c}{$60\times60$} \\

\cmidrule(lr){3-4} \cmidrule(lr){5-6} \cmidrule(lr){7-8}

& 
& Num & Rel. Err.
& Num & Rel. Err.
& Num & Rel. Err. \\
\midrule

1.0 & 2
& 2.12945 & 6.47e-02
& 2.04315 & 2.16e-02
& 2.02863 & 1.43e-02 \\

1.5 & 2
& 2.19216 & 9.61e-02
& 2.06912 & 3.46e-02
& 2.04870 & 2.43e-02 \\

2.0 & 2
& 2.26201 & 1.31e-01
& 2.09766 & 4.88e-02
& 2.07072 & 3.54e-02 \\

2.5 & 2
& 2.34005 & 1.70e-01
& 2.12836 & 6.42e-02
& 2.09413 & 4.71e-02 \\

3.0 & 2
& 2.42789 & 2.14e-01
& 2.16117 & 8.06e-02
& 2.11872 & 5.94e-02 \\

\bottomrule
\end{tabular}
\end{table}

\begin{figure}[htbp]
\centering

\begin{subfigure}{0.32\textwidth}
    \centering
    \includegraphics[width=\linewidth]{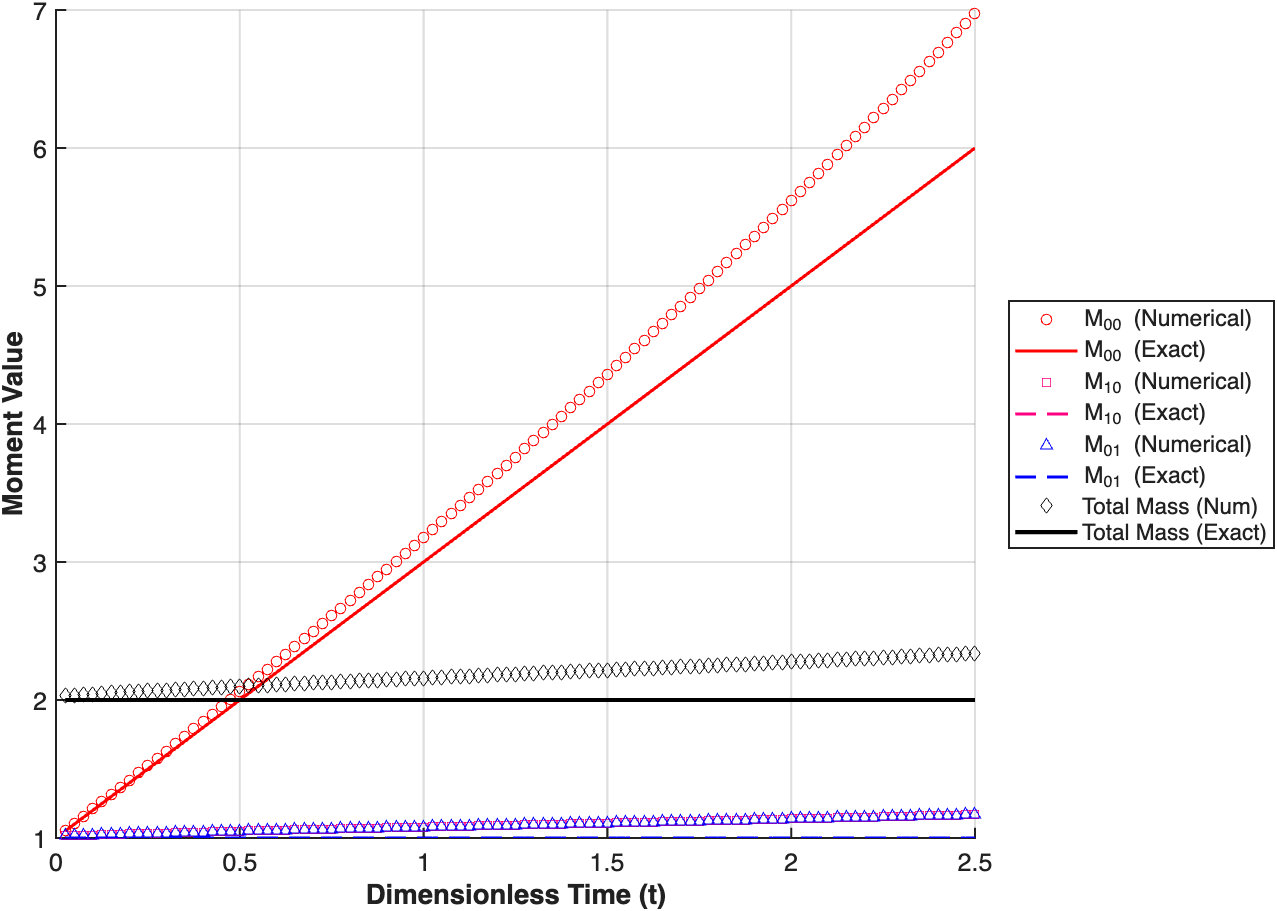}
    \caption{$20\times20$ grid}
\end{subfigure}
\hfill
\begin{subfigure}{0.32\textwidth}
    \centering
    \includegraphics[width=\linewidth]{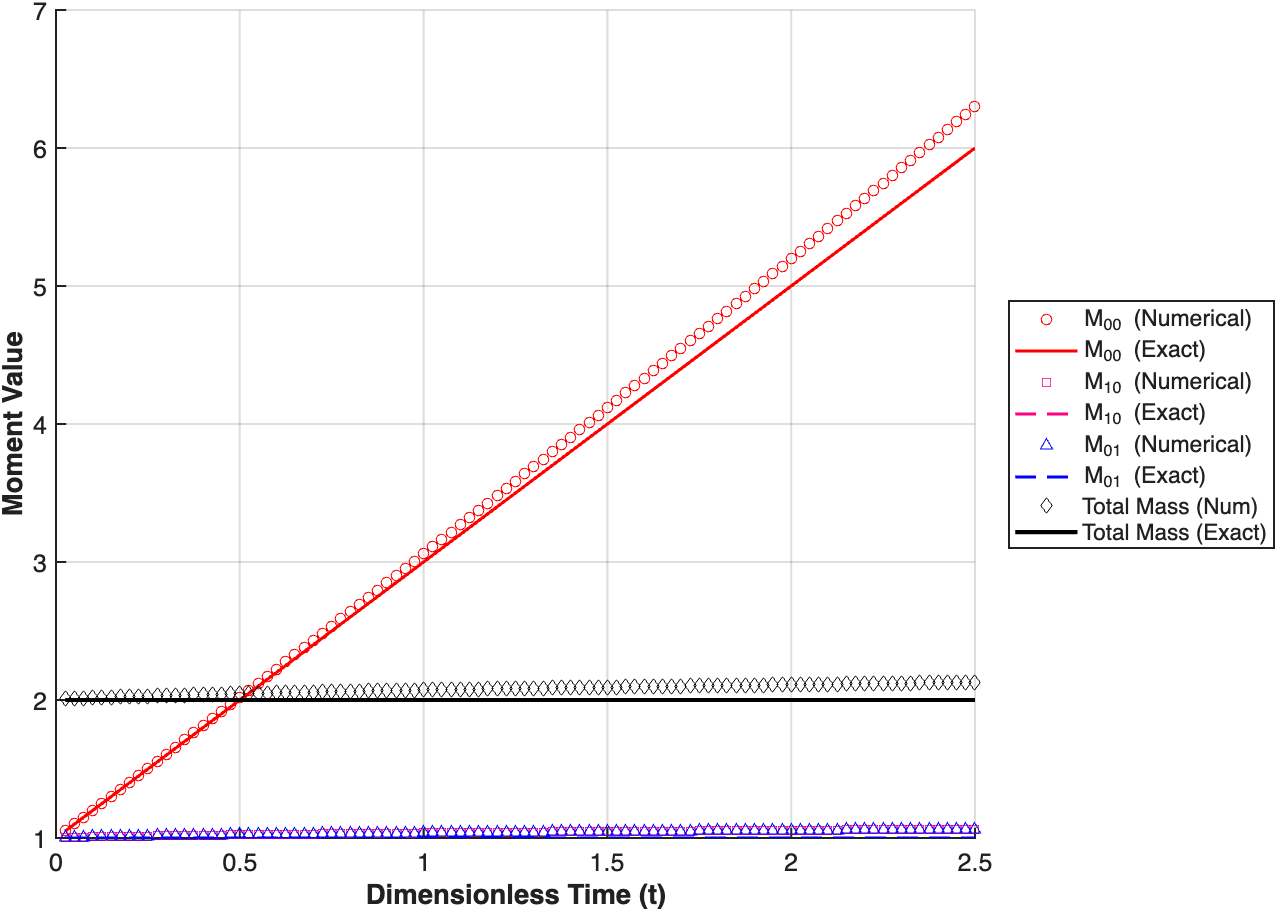}
    \caption{$40\times40$ grid}
\end{subfigure}
\hfill
\begin{subfigure}{0.32\textwidth}
    \centering
    \includegraphics[width=\linewidth]{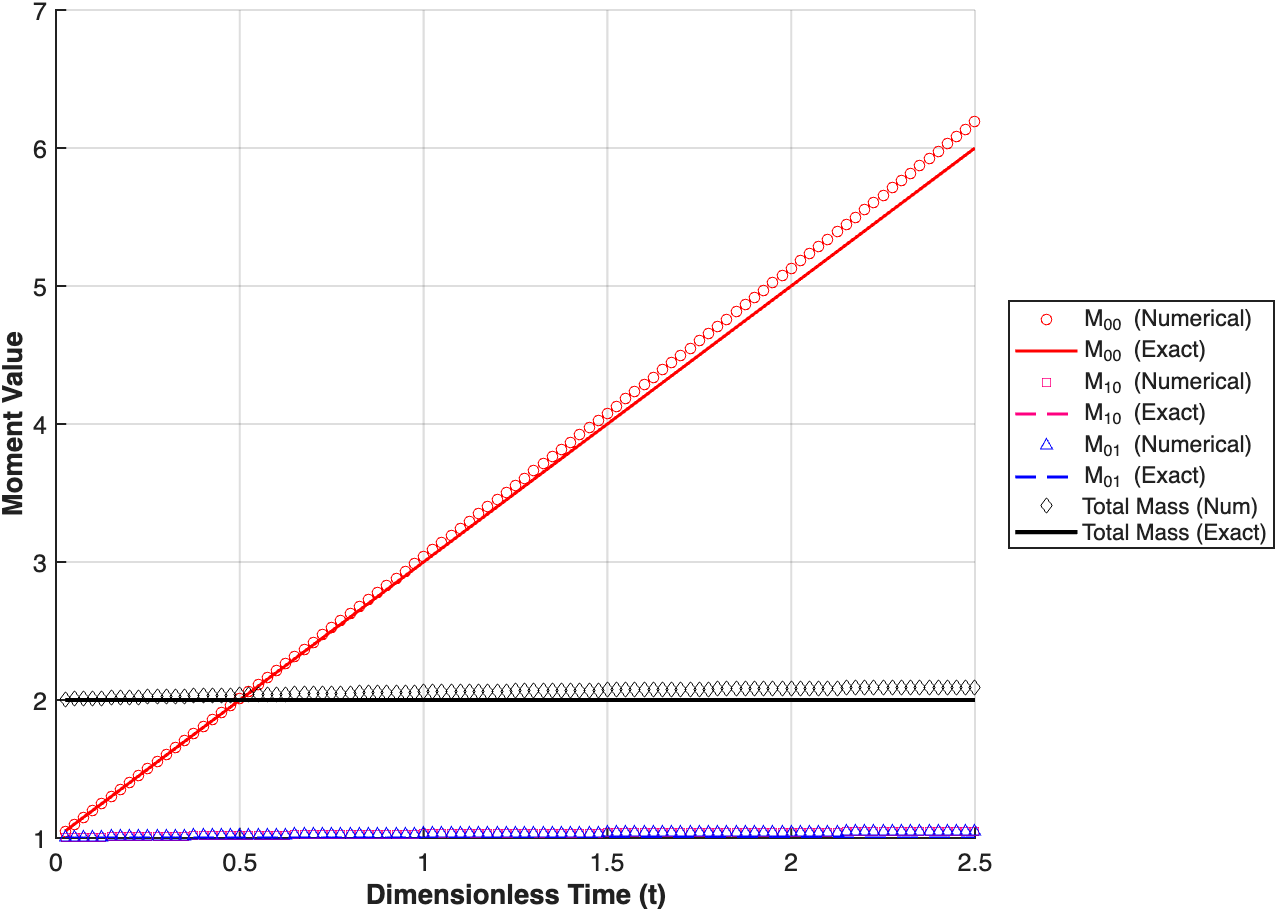}
    \caption{$60\times60$ grid}
\end{subfigure}

\caption{Comparison of Test Case 2 for different grid resolutions.}
\label{fig:case2}
\end{figure}
\textbf{Test Case 3: }In this example, the fragmentation process is binary with a constant selection rate equal to $\Gamma=1$. The breakage kernel is defined as  $2\delta\!\left(x_1-\frac{y_1}{2}\right)
\delta\!\left(x_2-\frac{y_2}{2}\right),
$ which represents symmetric binary breakage, where each parent particle splits into two equal fragments. The fragmentation of each particle occurs independently. The simulations are performed over the time interval $t \in [0,3]$. The numerical results are reported in Table~\ref{tab:m3_m00} - Table~\ref{tab:m3_m10m01}, while the corresponding solution profiles for Case 3 are presented in Figure~\ref{fig:case3}. Mass conservation is preserved throughout the simulation.
\begin{table}[H]
\centering
\caption{Comparison of exact and numerical values of $m_{00}(t)=e^t$ for Test Case 3.}
\label{tab:m3_m00}
\renewcommand{\arraystretch}{1.35}
\setlength{\tabcolsep}{8pt}

\begin{tabular}{c c cc cc cc}
\toprule
$t$ & Exact
& \multicolumn{2}{c}{$20\times20$}
& \multicolumn{2}{c}{$40\times40$}
& \multicolumn{2}{c}{$60\times60$} \\
\cmidrule(lr){3-4} \cmidrule(lr){5-6} \cmidrule(lr){7-8}
& 
& Num & Rel. Err.
& Num & Rel. Err.
& Num & Rel. Err. \\
\midrule

1.0 & 2.71828
& 2.31496 & 1.48e-01
& 2.58996 & 4.72e-02
& 2.63283 & 3.14e-02 \\

1.5 & 4.48169
& 3.01977 & 3.26e-01
& 3.85894 & 1.39e-01
& 4.05629 & 9.49e-02 \\

2.0 & 7.38906
& 3.58432 & 5.15e-01
& 5.35539 & 2.75e-01
& 5.91338 & 2.00e-01 \\

2.5 & 12.1825
& 3.95469 & 6.75e-01
& 6.94802 & 4.30e-01
& 8.13667 & 3.32e-01 \\

3.0 & 20.0855
& 4.13430 & 7.94e-01
& 8.50406 & 5.77e-01
& 10.6114 & 4.72e-01 \\

\bottomrule
\end{tabular}
\end{table}
\begin{table}[H]
\centering
\caption{Comparison of exact and numerical values of $m_{10}(t)+m_{01}(t)=2$ for Test Case 3.}
\label{tab:m3_m10m01}
\renewcommand{\arraystretch}{1.35}
\setlength{\tabcolsep}{8pt}

\begin{tabular}{c c cc cc cc}
\toprule
$t$ & Exact
& \multicolumn{2}{c}{$20\times20$}
& \multicolumn{2}{c}{$40\times40$}
& \multicolumn{2}{c}{$60\times60$} \\
\cmidrule(lr){3-4} \cmidrule(lr){5-6} \cmidrule(lr){7-8}
& 
& Num & Rel. Err.
& Num & Rel. Err.
& Num & Rel. Err. \\
\midrule

1.0 & 2
& 1.93237 & 3.38e-02
& 1.99562 & 2.19e-03
& 1.99982 & 9.18e-05 \\

1.5 & 2
& 1.84533 & 7.73e-02
& 1.98477 & 7.61e-03
& 1.99874 & 6.28e-04 \\

2.0 & 2
& 1.72423 & 1.38e-01
& 1.96320 & 1.84e-02
& 1.99555 & 2.22e-03 \\

2.5 & 2
& 1.57697 & 2.12e-01
& 1.92811 & 3.59e-02
& 1.98890 & 5.55e-03 \\

3.0 & 2
& 1.41447 & 2.93e-01
& 1.87830 & 6.09e-02
& 1.97771 & 1.11e-02 \\

\bottomrule
\end{tabular}
\end{table}

%
%

\textbf{Test Case 4: }In this example, the fragmentation mechanism is binary with a size-dependent selection function given by $
S(x_1,x_2) = x_1 + x_2,$
indicating that a particles with larger total size undergo breakage at a higher rate. The breakage kernel is defined as $
2\delta\!\left(x_1-\frac{y_1}{2}\right)
\delta\!\left(x_2-\frac{y_2}{2}\right),
$
which corresponds to symmetric binary fragmentation, where each parent particle splits into two equal fragments. Each particle fragments independently of the others. The simulations are carried out over the time interval $t \in [0,3]$. The numerical results are summarized in Table~\ref{tab:m4_m00} - Table~\ref{tab:m4_m10m01},  and the solution behavior for Case 4 is illustrated in Figure~\ref{fig:case4}. Mass conservation is maintained throughout the simulation. 
\begin{table}[H]
\centering
\caption{Comparison of exact and numerical values of $m_{00}(t)=1+2t$ for Test Case 4.}
\label{tab:m4_m00} 
\renewcommand{\arraystretch}{1.35}
\setlength{\tabcolsep}{8pt}

\begin{tabular}{c c cc cc cc}
\toprule
$t$ & Exact
& \multicolumn{2}{c}{$20\times20$}
& \multicolumn{2}{c}{$40\times40$}
& \multicolumn{2}{c}{$60\times60$} \\
\cmidrule(lr){3-4} \cmidrule(lr){5-6} \cmidrule(lr){7-8}
& 
& Num & Rel. Err.
& Num & Rel. Err.
& Num & Rel. Err. \\
\midrule

1.0 & 3
& 2.31496 & 1.48e-01
& 2.99766 & 7.79e-04
& 2.63283 & 3.14e-02 \\

1.5 & 4
& 3.01977 & 3.26e-01
& 3.98930 & 2.67e-03
& 4.03481 & 8.70e-03 \\

2.0 & 5
& 3.58432 & 5.15e-01
& 4.97188 & 5.62e-03
& 5.06814 & 1.36e-02 \\

2.5 & 6
& 3.95469 & 6.75e-01
& 5.94303 & 9.49e-03
& 6.11074 & 1.85e-02 \\

3.0 & 7
& 5.10811 & 2.70e-01
& 6.90046 & 1.42e-02
& 7.16012 & 2.29e-02 \\

\bottomrule
\end{tabular}
\end{table}

\begin{table}[H]
\centering
\caption{Comparison of exact and numerical values of $m_{10}(t)+m_{01}(t)=2$ for Test Case 4.}
\label{tab:m4_m10m01}
\renewcommand{\arraystretch}{1.35}
\setlength{\tabcolsep}{8pt}

\begin{tabular}{c c cc cc cc}
\toprule
$t$ & Exact
& \multicolumn{2}{c}{$20\times20$}
& \multicolumn{2}{c}{$40\times40$}
& \multicolumn{2}{c}{$60\times60$} \\
\cmidrule(lr){3-4} \cmidrule(lr){5-6} \cmidrule(lr){7-8}
& 
& Num & Rel. Err.
& Num & Rel. Err.
& Num & Rel. Err. \\
\midrule

1.0 & 2
& 1.93237 & 3.38e-02
& 2.01932 & 9.66e-03
& 1.99982 & 9.18e-05 \\

1.5 & 2
& 1.84533 & 7.73e-02
& 2.03176 & 1.59e-02
& 2.03957 & 1.98e-02 \\

2.0 & 2
& 1.72423 & 1.38e-01
& 2.04287 & 2.14e-02
& 2.05826 & 2.91e-02 \\

2.5 & 2
& 1.57697 & 2.12e-01
& 2.05265 & 2.63e-02
& 2.07755 & 3.88e-02 \\

3.0 & 2
& 1.68453 & 1.58e-01
& 2.06151 & 3.08e-02
& 2.09716 & 4.86e-02 \\

\bottomrule
\end{tabular}
\end{table}

%
%
%

\textbf{Test Case 5: }In this example, we consider the three–dimensional linear fragmentation equation with  breakage kernel $\beta(\mathbf{x},\mathbf{y})= \frac{8}{y_1y_2y_3}$
and a  selection function
$\Gamma(x_1, x_2) = x_1x_2x_3$.
The initial condition is prescribed as
$u(x_1, x_2,x_3, 0) = \delta(x_1-1)\delta(x_2-1)\delta(x_3-1).$
The exact moments are $m_{00}(t)=1+7t$ and $m_{111}(t)=1$.
Results are given in Table~\ref{tab:m5_m00}- Table~\ref{tab:m5_m111}
\begin{table}[H]
\centering
\caption{\textbf{Comparison of exact and numerical values of $m_{00}(t)=1+7t$ for Test Case 5.}}
\label{tab:m5_m00} 

\small
\renewcommand{\arraystretch}{1.3}
\setlength{\tabcolsep}{8pt}

\begin{tabular}{c|c|cc|cc|cc}
\toprule
$t$ & Exact 
& \multicolumn{2}{c|}{$5\times5$}
& \multicolumn{2}{c|}{$10\times10$}
& \multicolumn{2}{c}{$15\times15$} \\

\cmidrule(r){3-4}
\cmidrule(r){5-6}
\cmidrule(l){7-8}

& 
& Num & Rel.Err
& Num & Rel.Err
& Num & Rel.Err \\
\midrule

1
& 8
& 7.99124 & 0.001095
& 8.60212 & 0.075265 
& 8.28453 & 0.0355658 \\

1.5
& 11.5 
& 12.9037 & 0.122060
& 12.7629 & 0.109820 
& 12.1138 & 0.053377 \\

2.0
& 15
& 18.8128 & 0.254186
& 17.1758 & 0.145055
& 16.0794 & 0.071959 \\

2.5 
& 18.5 
& 25.8339 & 0.396428 
& 21.8478 & 0.180961
& 20.1857 & 0.091119 \\

3.0
& 22 
& 34.0938 & 0.549718
& 26.7855 & 0.217523
& 24.4361 & 0.110733 \\

\bottomrule
\end{tabular}
\end{table}

\begin{table}[H]
\centering
\caption{\textbf{Comparison of exact and numerical values of $m_{111}(t)=1$ for Test Case 5.}}
\label{tab:m5_m111}

\small
\renewcommand{\arraystretch}{1.3}
\setlength{\tabcolsep}{8pt}

\begin{tabular}{c|c|cc|cc|cc}
\toprule
$t$ & Exact 
& \multicolumn{2}{c|}{$5\times5$}
& \multicolumn{2}{c|}{$10\times10$}
& \multicolumn{2}{c}{$15\times15$} \\

\cmidrule(r){3-4}
\cmidrule(r){5-6}
\cmidrule(l){7-8}

& 
& Num & Rel.Err
& Num & Rel.Err
& Num & Rel.Err \\
\midrule

1
& 1
& 1.00214 & 0.002141
& 1.08075 & 0.080748
& 1.0411 & 0.041105 \\

1.5
& 1
& 1.12654 & 0.126542
& 1.11830 & 0.118303
& 1.06233 & 0.062326 \\

2.0
& 1
& 1.26176 & 0.261756
& 1.15636 & 0.156356
& 1.08408 & 0.084081 \\

2.5 
& 1
& 1.40886 & 0.408862
& 1.19480 & 0.194797
& 1.10607 & 0.106075 \\

3.0
& 1
& 1.56900 & 0.568997
& 1.23364 & 0.233641
& 1.12821 & 0.128207 \\

\bottomrule
\end{tabular}
\end{table}
%
%
\subsection{Rate of convergence test}

\textbf{Test Case 1: }In this example, we consider the two–dimensional linear fragmentation equation with the binary breakage kernel $
\beta(x_1,x_2 \mid y_1,y_2)=\frac{2}{y_1 y_2},
$
together with the constant selection function
$
\Gamma(x_1,x_2)=1.
$
For this problem, the analytical solution is given by
\[
u(x,y,t)
=
\left(1 + t\right)^3 \exp\left(-(x + y)(1 + t)\right)
\]
The analysis is performed over the computational domain $
\mathcal{D} := [10^{-9},\,2] \times [10^{-9},\,2]
$. To examine the convergence behavior, the grid is successively refined by increasing the number of grid points in each spatial direction at every iteration. 
The numerical results for uniform and non-uniform meshes are summarized in Table~\ref{tab:m6} and Table~\ref{tab:m61}. The findings are consistent with the theoretical analysis presented 

\begin{table}[htbp]
\centering
\caption{Error table for the PBE involving the breakage-growth process in uniform mesh for test case 1}
\label{tab:m6}
\renewcommand{\arraystretch}{1.2}
\begin{tabular}{c c c c c c c}
\hline
$h$ & $\|\mathcal{E}\|_{L^2(\mathcal{D} )}$ & EOC 
    & $\|\mathcal{E}\|_{H^1(\mathcal{D} )}$ & EOC
    & $\mathrm{RelError}_{L^2}$ & Order \\
\hline
\multicolumn{7}{c}{\textbf{$P_1$ polynomial space}} \\
\hline

0.707107    & 0.033277 & -- & 0.222819 & -- & 0.067782 & -- \\
0.353553   & 0.00844127 & 1.97899  & 0.111903 & 0.99362 &  0.017194 & 1.979 \\
0.176777    & 0.00212476  & 1.99016 & 0.056013& 0.998416 & 0.00432791 & 1.99016  \\
0.0883883   & 0.000533906 & 1.99264 & 0.0280139 & 0.999617 &  0.00108751 & 1.99264  \\
0.0441942   & 0.000134137  & 1.99288 & 0.0140079 & 0.999909 & 0.000273222 & 1.99288 \\
\hline

\multicolumn{7}{c}{\textbf{$P_2$ polynomial space}} \\
\hline
0.707107    & 0.00130762  & -- & 0.0227965 & -- &0.0026635& -- \\
0.353553   & 0.000165226 & 2.98443   & 0.00579873 & 1.975 &  0.000336548 & 2.98443 \\
0.176777    & $2.07149e-05 $  & 2.9957 & 0.00145603  &  1.99369 & $4.2194e-05$ & 2.9957  \\
0.0883883   & $2.59181e-06  $ & 2.99864 & 0.000364406 & 1.99843&  $ 5.27924e-06$ & 2.99864  \\
0.0441942   & $3.24087e-07$  & 2.99951  & $9.11263e-05$ & 1.99961 & $6.60131e-07 $&  2.99951 \\
\hline
\multicolumn{7}{c}{\textbf{$P_3$ polynomial space}} \\
\hline
0.707107    & $9.59004e-05$  & -- & 0.00132486 & -- &0.00019534& -- \\
0.353553   & $6.11619e-06$ & 3.97083   & 0.000168326 & 2.9765 &  $1.2458e-05$ & 3.97084 \\
0.176777    & $3.80829e-07 $  & 4.00542 & $2.11135e-05$ & 2.99502 & $7.75708e-07$ & 4.00542  \\
0.0883883   & $2.29431e-08 $ & 4.05301 & $2.63903e-06$ & 3.00009 &  $4.67327e-08$ & 4.05301  \\
0.0441942   & $1.73022e-09$  & 3.72904 & $3.31171e-07$ & 2.99436 & $3.52426e-09 $& 3.72904  \\
\hline
\end{tabular}
\end{table}

\begin{table}[htbp]
\centering
\caption{Error table for the PBE involving the breakage-growth process in  non uniform mesh for test case 1}
\label{tab:m61}
\renewcommand{\arraystretch}{1.2}
\begin{tabular}{c c c c c c c}
\hline
$h$ & $\|\mathcal{E}\|_{L^2(\mathcal{D} )}$ & EOC 
    & $\|\mathcal{E}\|_{H^1(\mathcal{D} )}$ & EOC
    & $\mathrm{RelError}_{L^2}$ & Order \\
\hline
\multicolumn{7}{c}{\textbf{$P_1$ polynomial space}} \\
\hline

2.12132     & 0.133092 & -- & 0.404926 & -- & 0.271184  & -- \\
1.23744   & 0.0346368  & 2.49747  & 0.206 & 1.25387 &  0.0705519 & 2.49807  \\
0.864242    & 0.0155253  & 2.23557  & 0.137661 & 1.12296 & 0.0316234 & 2.23558   \\
0.662913   & 0.00876396 &  2.15613  &  0.103329  & 1.08169 & 0.0178512 & 2.15613  \\
0.537401  & 0.00562001  & 2.11681 & 0.082694 & 1.06134 & 0.0114473   & 2.11681 \\
\hline

\multicolumn{7}{c}{\textbf{$P_2$ polynomial space}} \\
\hline
2.12132     &  0.0141101 & -- & 0.0869338 & -- & 0.0287503 & -- \\
1.23744   & 0.00179424 & 3.82621  & 0.0233369 & 2.43993 & 0.00365469 & 3.82681 \\
0.864242    &0.000534 & 3.37639 & 0.0105086  & 2.22274 & 0.0010877  & 3.37641    \\
0.662913   & 0.000225624  & 3.24847  &  0.00593831& 2.15214 & 0.000459573 & 3.24847   \\
0.537401  & 0.0001156  & 3.18602  & 0.00380863 & 2.11605 & 0.000235464  & 3.18602 \\
\hline
\multicolumn{7}{c}{\textbf{$P_3$ polynomial space}} \\
\hline

2.12132     & 0.00250041 & -- & 0.0130814 & -- & 0.00509474 & -- \\
1.23744   & 0.00016762 & 5.01396    &  0.00171656 & 3.76787 & 0.000341426  & 5.01457   \\
0.864242    & $3.35112e-05$ & 4.48486  & 0.000513733 & 3.3609 & $6.82587e-05$ & 4.48488   \\
0.662913   & $1.06163e-05$  & 4.33427  & 0.000217476  & 3.24128 & $2.16243e-05$ & 4.33427    \\
0.537401  & $4.33462e-06$ &  4.26758   & 0.000111536 & 3.18123 & $8.82915e-06$ & 4.26758 \\
\hline
\end{tabular}
\end{table}

\textbf{Test Case 2: }
In this example, we consider the two–dimensional linear fragmentation equation with the multiple breakage kernel $
\beta(x_1,x_2 \mid y_1,y_2)=\frac{4}{y_1 y_2},
$
together with the constant selection function
$
\Gamma(x_1,x_2)=1.
$
For this problem, the analytical solution is given by
\[
u(x,y,t)
=
\frac{(1+t)^4}
{\left(1+(1+t)x\right)^3
 \left(1+(1+t)y\right)^3}
\]
The computational domain is defined as $
\mathcal{D} := [10^{-9},\,2] \times [10^{-9},\,2]$ .To examine the convergence behavior, we perform a grid refinement study by increasing the number of nodes in each spatial dimension. The numerical results are summarized in Tables \ref{tab:m7} and \ref{tab:m71} for uniform and non-uniform meshes, respectively. These findings provide empirical confirmation of the theoretical convergence analysis established in Section~\ref{sec:4}.

	\begin{table}[htbp]
\centering
\caption{Error table for the PBE involving the breakage-growth process in uniform mesh for test case 2}
\label{tab:m7}
\renewcommand{\arraystretch}{1.2}
\begin{tabular}{c c c c c c c}
\hline
$h$ & $\|\mathcal{E}\|_{L^2(\mathcal{D} )}$ & EOC 
    & $\|\mathcal{E}\|_{H^1(\mathcal{D} )}$ & EOC
    & $\mathrm{RelError}_{L^2}$ & Order \\
\hline
\multicolumn{7}{c}{\textbf{$P_1$ polynomial space}} \\
\hline

0.707107    & 0.0907978 & -- & 0.565153 & -- & 0.456477 & -- \\
0.353553   & 0.0258793 & 1.81086  & 0.320246 & 0.81946 &  0.129898 & 1.81316 \\
0.176777    & 0.00681066 & 1.92593 & 0.166723 & 0.941733& 0.0341839 & 1.92599   \\
0.0883883   & 0.00174589 & 1.96383  & 0.0842775 & 0.98423 &  0.00876296 & 1.96383  \\
0.0441942   & 0.000444965  & 1.97178 & 0.0422548 & 0.996033 & 0.00223402 & 1.97178  \\
\hline

\multicolumn{7}{c}{\textbf{$P_2$ polynomial space}} \\
\hline
0.707107    & 0.0118579   & -- & 0.177086 & -- &0.0596322 & -- \\
0.353553   & 0.00176825& 2.74546   & 0.0566981& 1.975 &  0.00887816 &  2.74776  \\
0.176777    &0.000234843  & 2.91255 & 0.0153886   &  1.99369 & 0.00117907& 2.91261 \\
0.0883883   & $2.99464e-05  $ & 2.97125 & 0.00393543 & 1.99843&  0.00015035 &  2.97125   \\
0.0441942   & $3.77567e-06 $  &2.98758 & 0.000989574 & 1.99961 & 1.89563e-05 &  2.98758 \\
\hline
\multicolumn{7}{c}{\textbf{$P_3$ polynomial space}} \\
\hline
0.707107    &  0.00285561  & -- & 0.0380027 & -- &0.0143565& -- \\
0.353553   & 0.000246738 & 3.53234   & 0.00644771 & 2.55924 & 0.00123884 & 3.53464 \\
0.176777    & $1.7301e-05$  & 3.83405 & 0.000894784 & 2.84918 & $8.68622e-05$ & 3.83412  \\
0.0883883   & $1.12455e-06$ & 3.94343 & 0.000115144 & 2.95811 &  $5.64597e-06$ & 3.94343  \\
0.0441942   & $7.14667e-08$  & 3.97593 & $1.44945e-05$ & 2.98986 & $ 3.58809e-07 $& 3.97593  \\

\hline
\end{tabular}
\end{table}	

\begin{table}[htbp]
\centering
\caption{Error table for the PBE involving the breakage-growth process in non uniform mesh for test case 2}
\label{tab:m71}
\renewcommand{\arraystretch}{1.2}
\begin{tabular}{c c c c c c c}
\hline
$h$ & $\|\mathcal{E}\|_{L^2(\mathcal{D} )}$ & EOC 
    & $\|\mathcal{E}\|_{H^1(\mathcal{D} )}$ & EOC
    & $\mathrm{RelError}_{L^2}$ & Order \\
\hline
\multicolumn{7}{c}{\textbf{$P_1$ polynomial space}} \\
\hline

2.12132     & 0.124822 & -- & 0.620504  & -- & 0.629382 & -- \\
1.23744   & 0.03686  & 2.26302   & 0.334221 & 1.14793 & 0.18509  & 2.2707   \\
0.662913   & 0.0095572  & 2.16266   &  0.175057  & 1.03611 &0.0479835 &  2.1629    \\
0.342505   & 0.0024189  & 2.08066  & 0.0884951 & 1.03302 & 0.0121445 & 2.08067     \\
0.174015   & 0.000608845 &  2.03723   & 0.0443704   & 1.01953 &0.0030568 & 2.03723 \\
\hline

\multicolumn{7}{c}{\textbf{$P_2$ polynomial space}} \\
\hline
2.12132     &  0.0216072 & -- & 0.212275  & -- &0.108949 & -- \\
1.23744   & 0.00351271 & 3.3704   &  0.0683196 & 2.10332 & 0.0176388  & 3.37808   \\
0.662913   & 0.000443517 & 3.31554  & 0.0185764  & 2.08651 & 0.00222675& 3.31578    \\
0.342505   & $5.56052e-05$  & 3.14445  & 0.00473578  & 2.06971 & 0.000279175& 3.14445     \\
0.174015   & $6.95541e-06$ &  3.06988   &  0.00118974  & 2.04005 & $3.49207e-05$ & 3.06988\\

\hline
\multicolumn{7}{c}{\textbf{$P_3$ polynomial space}} \\
\hline
2.12132     & 0.00655992 & -- & 0.0545344  & -- & 0.0330767& -- \\
1.23744   & 0.000637708 & 4.32443   &  0.0100451& 3.1387 & 0.0032022  & 4.33211   \\
0.662913   & $4.2035e-05$ & 4.3569  & 0.00134639 & 3.21981 & 0.000211044 & 4.35714   \\
0.342505   & $2.66185e-06$  & 4.17877  & 0.000171332  & 3.12192 & $1.33642e-05$ & 4.17877    \\
0.174015   & $1.65587e-07 $ &  4.10145   & $2.15119e-05$ & 3.06432 & $8.31353e-07 $ & 4.10145 \\

\hline
\end{tabular}
\end{table}	


\textbf{Test Case 3: }
Consider the three–dimensional linear fragmentation equation with the binary breakage kernel $
\beta(x_1,x_2,x_3 \mid y_1,y_2,y_3)=\frac{2}{y_1 y_2y_3},
$
together with the constant selection function
$
\Gamma(x_1,x_2,x_3)=1.
$
For this problem, the analytical solution is given by
\[
u(x,y,t)
=
\left(1 + t\right)^4 \exp\left(- (x + y + z)\left(1 + t\right)\right)
\]
The computational domain is defined as $
\mathcal{D} := [10^{-9},\,2] \times [10^{-9},\,2]\times [10^{-9},\,2],
$
and a uniform time step size is employed. To examine the convergence behavior, the grid is successively refined by increasing the number of grid points in each spatial direction at every iteration. 
The numerical results, summarized in Table~\ref{tab:m8} and Table~\ref{tab:m81}. The findings are consistent with the theoretical analysis presented.
	
	\begin{table}[htbp]
\centering
\caption{Error table for the PBE involving the breakage-growth process in uniform mesh for test case 3}
\label{tab:m8}
\renewcommand{\arraystretch}{1.2}
\begin{tabular}{c c c c c c c}
\hline
$h$ & $\|\mathcal{E}\|_{L^2(\mathcal{D} )}$ & EOC 
    & $\|\mathcal{E}\|_{H^1(\mathcal{D} )}$ & EOC
    & $\mathrm{RelError}_{L^2}$ & Order \\
\hline
\multicolumn{7}{c}{\textbf{$P_1$ polynomial space}} \\
\hline
1.73205 & 0.182653  & -- & 0.600829  & -- & 0.531406 & -- \\
0.866025 & 0.0472836 & 1.94969  & 0.290166 & 1.05008 &  0.137465 & 1.95075 \\
0.57735 & 0.0214541  & 1.94899 & 0.191897 & 1.0198 & 0.0623713 &  1.94902  \\
0.433013 & 0.0122741& 1.94113   & 0.143485 & 1.01059 & 0.0356831 & 1.94113 \\
0.34641 & 0.00797185  & 1.93405 & 0.114618 & 1.00664 & 0.0231757 & 1.93405  \\
\hline

\multicolumn{7}{c}{\textbf{$P_2$ polynomial space}} \\
\hline
1.73205    & 0.0185723   & -- & 0.150071 & -- & 0.0540338 & -- \\
0.866025   & 0.00239078 & 2.9576   & 0.0402155 & 1.89982 &  0.00695055 &  2.95866  \\
0.57735   & 0.000713045   & 2.9838  & 0.0181186  & 1.96642 & 0.00207296 & 2.98384 \\
0.433013   & 0.000302002 & 2.98632 & 0.0102402 & 1.9835 &  0.000877978&  2.98632  \\
0.34641    & 0.00015513  & 2.98538 & 0.00656792 & 1.9903 & 0.000450993 &  2.98538  \\
\hline
\multicolumn{7}{c}{\textbf{$P_3$ polynomial space}} \\
\hline
1.73205    & 0.00303667  & -- & 0.0271831 & -- & 0.00883479& -- \\
0.866025   & 0.000211616 & 3.84296   & 0.00370841 & 2.87384 & 0.000615219 &  3.84402  \\
0.57735   & $4.30523e-05$  & 3.92724  &  0.00113441  & 2.92131 & 0.000125161 & 3.92728 \\
0.433013   & $1.38192e-05$ & 3.95004 & 0.000523708 & 2.68676 &  $4.01751e-05$ &  3.95004   \\
0.34641    & $5.74964e-06 $ & 3.92986 &  0.000345783 & 1.86034 & $1.67153e-05$ &  3.92986 \\

\hline
\end{tabular}
\end{table}		

\begin{table}[htbp]
\centering
\caption{Error table for the PBE involving the breakage-growth process in non uniform mesh for test case 3}
\label{tab:m81}
\renewcommand{\arraystretch}{1.2}
\begin{tabular}{c c c c c c c}
\hline
$h$ & $\|\mathcal{E}\|_{L^2(\mathcal{D} )}$ & EOC 
    & $\|\mathcal{E}\|_{H^1(\mathcal{D} )}$ & EOC
    & $\mathrm{RelError}_{L^2}$ & Order \\
\hline
\multicolumn{7}{c}{\textbf{$P_1$ polynomial space}} \\
\hline

3.4641 & 0.67955  & -- & 1.29624& -- & 2.02379 & -- \\
1.9245 & 0.0800848 & 3.63796  & 0.354238 & 2.20702 & 0.232842 & 3.67883 \\
1.51554 & 0.045787  & 2.34033 & 0.263051  & 1.24584 & 0.133114  &  2.34062  \\
1.05848  & 0.0207684  & 2.20248  & 0.174111   & 1.14963 & 0.0603778  & 2.20252 \\
0.919047 & 0.0153536  & 2.13867& 0.149025 & 1.10147 & 0.0446362 & 2.13864  \\
\hline

\multicolumn{7}{c}{\textbf{$P_2$ polynomial space}} \\
\hline

3.4641 & 0.131376  & -- & 0.486425 & -- & 0.391255 & -- \\
1.9245 & 0.00661875 & 5.08374 & 0.0659134 & 3.40045  & 0.0192436 & 5.12461  \\
1.51554 & 0.00280493  & 3.5938 & 0.0378203  & 2.3253 & 0.0081546  &  3.59409   \\
1.05848  & 0.000833454 & 3.38089  & 0.0170509 & 2.2194  & 0.00242301 & 3.38093\\
0.919047 & 0.000525118  & 3.27054  & 0.0125653  & 2.1612 & 0.00152663  & 3.27051  \\
\hline
\multicolumn{7}{c}{\textbf{$P_3$ polynomial space}} \\
\hline
3.4641 & 0.0346772  & -- & 0.163219 & -- & 0.103273 & -- \\
1.9245 & 0.000876107 & 6.25797  & 0.00932043 & 4.87062 & 0.00254723 & 6.29883 \\
1.51554 & 0.000283647  & 4.72078 & 0.0040198  & 3.52032 & 0.000824631 &  4.72108   \\
1.05848  & $ 5.6599e-05$ & 4.49022  & 0.00122292  & 3.31524 & 0.000164544 & 4.49026 \\
0.919047 & $3.04266e-05$  & 4.3942 & 0.000792226 & 3.07371 & $8.84565e-05$ & 4.3942  \\

\hline
\end{tabular}
\end{table}		
\FloatBarrier
\section{Conclusion}\label{sec:6}
This work establishes a robust FEM-BDF2 method for solving multidimensional PBEs. To evaluate the precision and efficiency of the proposed method, it was applied to eight different combinations of selection functions, breakage kernels, and initial conditions for 2D and 3D fragmentation PBEs, using various grid sizes and basis functions. We have observed that this method is not only mathematically stable but also maintains high accuracy for moment preservation for both 2D and 3D regimes. Key features of the scheme include its robustness, validated by stability analysis, which verifies that the BDF2 scheme yields bounded solutions; its applicability to any grid; and its high accuracy in predicting both zeroth- and first-order moments. These findings suggest that the BDF2 scheme is very useful where both accuracy and computational speed are paramount. In future work, our aim is to extend the algorithm to accommodate more complex mechanisms, such as nonlinear breakage effects arising from collisional fragmentation.

\noindent {\bf Data availability:}
Data are available from the authors upon request.

\noindent {\bf Declaration of competing interest:} 
The authors declare no known competing financial interests or personal relationships that could have influenced the work reported in this paper.

\noindent {\bf  Acknowledgments:}  The first author gratefully acknowledges the Ministry of Education for supporting her research.

\bibliographystyle{abbrv}
\bibliography{ref}
\end{document}